\numberwithin{equation}{section}
\newtheorem{theorem}{Theorem}[section]
\newtheorem{definition}[theorem]{Definition}
\newtheorem{lemma}[theorem]{Lemma}
\newtheorem{proposition}[theorem]{Proposition}
\newtheorem{remark}[theorem]{Remark}
\newtheorem{corollary}[theorem]{Corollary}
\newtheorem{example}[theorem]{Example}
\newtheorem{claim}{Claim}
\numberwithin{equation}{section}
\def\bfi{\mathbf{i}}
\def\bfj{\mathbf{j}}
\def\bfk{\mathbf{k}}
\def\bfl{\mathbf{l}}
\def\bfw{\mathbf{w}}
\def\bfh{\mathbf{h}}
\def\sl{\mathfrak{sl}}
\def\al{\alpha}
\def\be{\beta}
\def\hh{\mathfrak{h}}
\def\gg{\mathfrak{g}}
\def \<{\langle}
\def \>{\rangle}
\def\GG{\mathcal{G}}
\def\UU{\mathcal{U}}
\def\VV{\mathcal{V}}
\def\LL{\mathcal{L}}
\def\HH{\mathcal{H}}
\def\WW{\mathcal{W}}
\newcommand{\C}{\mathbb {C}}
\newcommand{\N}{\mathbb{N}}
\newcommand{\Z}{\mathbb{Z}}
\def\Ind{\mathrm{Ind}}
\def\supp{\mathrm{supp}}
\begin{document}

\title[planar Galilean conformal algebra]{Irreducible modules over the universal central extension of the planar Galilean conformal algebra}
\author{Dongfang Gao}
  \address{D. Gao:  Chern Institute of Mathematics and LPMC, Nankai University, Tianjin 300071, P. R. China, and Institut Camille Jordan, Universit\'{e} Claude Bernard Lyon 1, Lyon, 69622, France.}
  \email{gao@math.univ-lyon1.fr}
  \keywords{infinite-dimensional Galilean conformal algebra; planar Galilean conformal algebra; Whittaker module; tensor product module}
  \subjclass[2020]{17B10, 17B65, 17B66, 17B68, 17B70}

\maketitle

\begin{abstract}
In this paper, we study the representation theory of the universal central extension $\GG$ of the infinite-dimensional Galilean conformal algebra, introduced by Bagchi-Gopakumar, in $(2+1)$ dimensional space-time, which was named the planar Galilean conformal algebra by Aizawa. More precisely, we construct a family of Whittaker modules $W_{\psi_{m,n}}$ over $\GG$ while the necessary and sufficient conditions for these modules to be irreducible are given when $m\in\Z_+, n\in\N$ and $m,n$ have the same parity. 
Moreover, the irreducible criteria of the tensor product modules $\Omega(\lambda,\eta,\sigma,0)\otimes R, \Omega (\lambda,\eta,0,\sigma)\otimes R$ over $\GG$ are obtained, 
where $\Omega(\lambda,\eta,\sigma,0), \Omega (\lambda,\eta,0,\sigma)$ are $\UU(\hh)$-free modules of rank one over $\GG$ 
and $R$ is an irreducible restricted module over $\GG$.
Also, the isomorphism classes of these tensor product modules are determined.
\end{abstract}

\raggedbottom
\section{Introduction}
The physicists Bagchi and Gopakumar introduced the infinite-dimensional Galilean conformal algebras in \cite{BG} where their motivation is to construct a systematic non-relativistic limit of the AdS/CFT conjecture introduced by Maldacena in \cite{Ma}.
The AdS/CFT correspondence could have a better understanding by investigating these algebras (see \cite{BGMM, MT}).
Moreover, these algebras are related to many well-known infinite-dimensional Lie algebras, for example, the Virasoro algebra (see \cite{Vir}), the Heisenberg-Virasoro algebra (see \cite{ADKP}), the $W(2,2)$ algebra (see \cite{ZD}), the affine Lie algebras (see \cite{Car, Kac}) and so on.
So the infinite-dimensional Galilean conformal algebras have the important and widespread applications in mathematics and mathematical physics. 
 In this paper, we concern the infinite-dimensional Galilean conformal algebra in $(2+1)$ dimensional space-time, named the planar Galilean conformal algebra by Aizawa in \cite{A}.
This algebra is also the special case of \cite{MT}. 
 We will mainly investigate the representation theory of the universal central extension of the planar Galilean conformal algebra since the centers may be important in physics.

For any infinite-dimensional Lie algebra, 
one of the most important problems of the representation theory is that classifying its all irreducible modules/representations. Of course, it is very difficult. So mathematicians try to classify the certain irreducible modules and construct various new irreducible modules by the different ways.
First, weight modules have been the popular modules for many Lie algebras with the triangular
decompositions. In particular, Harish-Chandra modules (weight modules with finite-dimensional weight spaces) are well studied for many infinite-dimensional Lie algebras, for example, 
the Virasoro algebra (see \cite{LZ, M, S}), the twisted Heisenberg-Virasoro algebra (see \cite{LG, LZ1}), 
the affine Kac-Moody Lie algebras (see \cite{CP, GZ}), the Witt algebra of rank $n$ (see \cite{BF}) and so on.
Some results about weight modules with infinite-dimensional weight spaces were obtained in \cite{BBFK, ChenGZ, GaoZ, LZ2, MZ}.

Recently, two families of non-weight modules attract more attentions from mathematicians,
called Whittaker modules and $\UU(\hh)$-free modules respectively, where $\hh$ is the Cartan subalgebra of the Lie algebra.
 Whittaker modules were introduced by Kostant in \cite{K} where he systematically studied Whittaker modules over the arbitrary finite-dimensional complex semisimple Lie algebra $\mathfrak{l}$. In particular, he proved that Whittaker modules with a fixed regular Whittaker function (Lie homomorphism) on a nilpotent radical are (up to isomorphism) in bijective correspondence with the central characters of $\UU(\mathfrak{l})$.
 Actually, these modules for $sl_2(\C)$ were earlier constructed by Arnal and Pinczon in \cite{AP}. 
So far, Whittaker modules for many other Lie algebras have been investigated (see \cite{ALZ, BO, Chr, GMZ, LZ2, MZ1, Mc}). 
The notation of $\UU(\hh)$-free modules was first introduced by Nilsson in \cite{N} for the simple Lie
algebra $sl_{n+1}(\C)$. At the same time, these modules were introduced in a very different approach in
\cite{TZ}. Later, $\UU(\hh)$-free modules for many infinite-dimensional Lie algebras are determined, for
example, the Witt algebra in \cite{TZ}, the Kac-Moody algebras in \cite{CTZ} and so on. 

For the infinite-dimensional Lie algebra $\GG$,
Verma modules and coadjoint representations were studied in \cite{A},
Harish-Chandra modules can be obtained from \cite{CLW}, 
the certain  Whittaker modules were investigated in \cite{CYY}, restricted modules under the certain conditions were characterized in \cite{CY, GG}, $\UU(\hh)$-free modules of rank one were determined in \cite{CGZ}. 
In this paper, 
we shall construct a family of Whittaker $\GG$-modules including the Whittaker $\GG$-modules in \cite{CYY} while investigating the irreducible criteria of these modules.
Moreover, we determine the irreducibility and isomorphism classes of tensor product modules of the Whittaker modules and the $\UU(\hh)$-free modules over $\GG$.

The paper is organized as follows.
In Section 2, we recall that how the infinite-dimensional Galilean conformal algebras arose.
Then we review the universal central extension of the planar Galilean conformal algebra.
Also, we collect some results about restricted modules and $\UU(\hh)$-free modules over $\GG$ for later use.
Moreover, we construct a family of Whittaker modules $W_{\psi_{m,n}}$ over $\GG$ where $m,n\in\N$.
In Section 3, we determine the necessary and sufficient conditions for the Whittaker module $W_{\psi_{m,n}}$ to be irreducible when $m\in\Z_+, n\in\N$ and $m, n$ have the same parity, see Theorem \ref{main-1}.
When $m$ and $n$ have the different parities, we give a conjecture by computing some examples.
In Section 4, we obtain the irreducible criteria of tensor product modules $\Omega(\lambda,\eta,\sigma,0)\otimes R$ and $\Omega (\lambda,\eta,0,\sigma)\otimes R$
over $\GG$, where $\Omega(\lambda,\eta,\sigma,0), \Omega (\lambda,\eta,0,\sigma)$ are $\UU(\hh)$-free modules of rank one over $\GG$ and  $R$ is a restricted module over $\GG$, see Theorem \ref{tensor product modules}.
Furthermore, we determine the isomorphism classes of these tensor product modules, see Theorem \ref{iso}.
In Section 5, we present some concrete examples of irreducible $\GG$-modules by applying Theorem \ref{tensor product modules}.

Throughout this paper, we denote by $\Z, \Z_+, \N, \C$ and $\C^*$ the sets of integers, positive integers,  non-negative integers, complex numbers and nonzero complex numbers respectively.
All vector spaces and algebras are over $\C$.
We denote by $\UU(\gg)$ the universal enveloping algebra of a Lie algebra $\gg$.

\section{Notations and preliminaries}

In this section, we recall the source of the infinite-dimensional Galilean conformal algebras in order to show that
the infinite-dimensional Galilean conformal algebras are really important in mathematical physics rather than artificial. Moreover, we collect some known results about restricted modules and $\UU(\hh)$-free modules over the universal central extension $\GG$ of the planar Galilean conformal algebra. 
\subsection{From Galilean algebras to infinite-dimensional Galilean conformal algebras}\label{sub2.1}

The Galil-ean algebra $G(d,1)$ in $(d+1)$ dimensional space-time has a basis $\{J_{ij},P_i,B_i,H~|~i,j=1,2,\cdots,d\}$ satisfying the following commutation relations
\begin{equation}\label{G algebra}
\begin{split}
&[J_{ij}, J_{rs}]=\delta_{ir}J_{js}+\delta_{is}J_{rj}+\delta_{jr}J_{si}+\delta_{js}J_{ir},\\
&[H,B_i]=-P_i,\\
&[J_{ij}, B_r]=-(\delta_{jr} B_i-\delta_{ir}B_j),\\
&[J_{ij},P_r]=-(\delta_{jr}P_i-\delta_{ir}P_j),\\
&[J_{ij},H]=[H, P_i]=[B_i,P_j]=[B_i,B_j]=[P_i,P_j]=0,
\end{split}
\end{equation}
where
	\begin{equation} \label{G field}
	J_{ij}=-(x_i\partial_j-x_j\partial_i),\qquad P_i=\partial_i,\qquad
	B_i=t\partial_i,\qquad \qquad H=-\partial_t,
	\end{equation}
	and $x_i,t$ are variables.
By adding the additional generators $\{D,K,K_i~|~i=1,\cdots,d\}$, where
\begin{equation}\label{GCA field}
\begin{split}
&D=-(\sum_{i=1}^dx_i\partial_i+t\partial_t),\\
&K=-(2t\sum_{i=1}^dx_i\partial_i+t^2\partial_t),\\
&K_i=t^2\partial_i.
\end{split}
\end{equation}
we can get the Galilean conformal algebra in $(d+1)$ dimensional space-time spanned by $$\{J_{ij},P_i,B_i,H,D,K,K_i~|~i,j=1,2,\cdots,d\},$$ satisfying the commutation relations \eqref{G algebra} and
\begin{equation*}
\begin{split}
&[J_{ij}, K_r]=-(K_i\delta_{jr}-K_j\delta_{ir}),\quad [K,B_i]=K_i,\quad\;\;\, [K, P_i]=2B_i,\\
&[H,K_i]=-2B_i,\hspace{2.4cm}  [D,K_i]=-K_i,\quad [D,P_i]=P_i,\\
&[D, H]=H,\hspace{3.0cm}  [H,K]=-2D,\;\;\; [D,K]=-K,\\
&[J_{ij}, D]=[J_{ij}, K]=[D, B_i]=[K, K_i]=[K_i, K_j]=[K_i,B_j]=[K_i, P_j]=0.
\end{split}
\end{equation*}

Now, we denote
\begin{equation*}
\begin{split}
&L^{(-1)}=H,\quad L^{(0)}=D,\quad L^{(1)}=K, \quad M_i^{(-1)}=P_i,\quad M_i^{(0)}=B_i,\quad M_i^{(1)}=K_i.
\end{split}
\end{equation*}
Then the Galilean conformal algebra in $(d+1)$ dimensional space-time is spanned by
\[
\{J_{ij},L^{(n)},M_i^{(n)}~|~i,j=1,2,\cdots,d,n=0,\pm1\}
\]
with the commutation relations
\begin{equation*}
\begin{split}
&[L^{(m)},L^{(n)}]=(m-n)L^{(m+n)},\\
&[L^{(m)},M_i^{(n)}]=(m-n)M_i^{(m+n)},\\
&[J_{ij}, M_k^{(m)}]=-(M_i^{(m)}\delta_{jk}-M_j^{(m)}\delta_{ik}),\\
&[J_{ij}, L^{(n)}]=[M_i^{(m)}, M_j^{(n)}]=0,
\end{split}
\end{equation*}
where $m,n=0,\pm1,i,j=1,2,\cdots,d.$
In fact, we may define the vector fields
\begin{equation*}
\begin{split}
&J_{ij}=-(x_i\partial_j-x_j\partial_i),\\
&L^{(n)}=-(n+1)t^n\sum_{i=1}^dx_i\partial_i-t^{n+1}\partial_t,\\
&M_i^{(n)}=t^{n+1}\partial_i,
\end{split}
\end{equation*}
where $n=0,\pm1,i,j=1,2,\cdots,d$.
These are exactly the vector fields in \eqref{G field} and \eqref{GCA field},
so they generate the Galilean conformal algebra.
Furthermore, we have a very natural extension, for arbitrary $n\in\Z, 1\leq i,j\leq d$, define
\begin{equation*}
\begin{split}
&L^{(n)}=-(n+1)t^n\sum_{i=1}^dx_i\partial_i-t^{n+1}\partial_t,\\
&M_i^{(n)}=t^{n+1}\partial_i,\\
&J_{ij}^{(n)}=-t^n(x_i\partial_j-x_j\partial_i).
\end{split}
\end{equation*}
Then we obtain the infinite-dimensional Galilean conformal algebra $\mathrm{GCA}$ in $(d+1)$ dimensional space-time
$$
\mathrm{GCA}=\mathrm{span}\{J_{ij}^{(n)},L^{(n)},M_i^{(n)}~|~n\in\Z,i,j=1,2,\cdots,d\},
$$
satisfying the following commutation relations 
\begin{align}
&[L^{(m)},L^{(n)}]=(m-n)L^{(m+n)},\notag\\
&[J_{ij}^{(m)}, J_{rs}^{(n)}]=\delta_{ir}J_{js}^{(m+n)}+\delta_{is}J_{rj}^{(m+n)}+\delta_{jr}J_{si}^{(m+n)}+\delta_{js}J_{ir}^{(m+n)},\label{IGCA}\\
&[L^{(m)},J_{ij}^{(n)}]=-nJ_{ij}^{(m+n)},\qquad [L^{(m)},M_i^{(n)}]=(m-n)M_i^{(m+n)},\notag\\
&[J_{ij}^{(m)},M_k^{(n)}]=-(\delta_{jk}M_i^{(m+n)}-\delta_{ik}M_j^{(m+n)}),\quad [M_i^{(m)},M_j^{(n)}]=0,\notag
\end{align}
for any $m,n\in\Z, i,j,k=1,2,\cdots,d$.
The readers could see \cite{BG} for the more details about the infinite-dimensional Galilean conformal algebras.

\subsection{Universal central extension of the planar Galilean conformal algebra}\label{universal central extension}

From subsection \ref{sub2.1}, we know that the planar Galilean conformal algebra is spanned by $\{J_{12}^{(n)}, L^{(n)},M_i^{(n)}~|~n\in\Z,i=1,2\}$
satisfying the following commutation relations
\begin{equation*}
\begin{split}
&[L^{(m)},L^{(n)}]=(m-n)L^{(m+n)},\qquad [L^{(m)}, J_{12}^{(n)}]=-nJ_{12}^{(m+n)},\\
&[L^{(m)}, M_1^{(n)}]=(m-n)M_1^{(m+n)},\quad\; [L^{(m)}, M_2^{(n)}]=(m-n)M_2^{(m+n)},\\
&[J_{12}^{(m)}, M_1^{(n)}]=M_2^{(m+n)},\qquad \qquad\;  [J_{12}^{(m)}, M_2^{(n)}]=-M_1^{(m+n)},\\
&[J_{12}^{(m)}, J_{12}^{(n)}]=[M_1^{(m)}, M_1^{(n)}]=[M_2^{(m)}, M_2^{(n)}]=[M_1^{(m)}, M_2^{(n)}]=0,
\end{split}
\end{equation*}
for all $m,n\in\Z$.
We would like to simplify the notations (see \cite{CYY}) for convenience.
Let
$$L_n=-L^{(n)},~ H_n=\sqrt{-1}J_{12}^{(n)},~ I_n=M_1^{(n)}+\sqrt{-1}M_2^{(n)},~ J_n=M_1^{(n)}-\sqrt{-1}M_2^{(n)},\quad \forall n\in\Z.$$
Then it is easy to check that $\{L_n, H_n,I_n,J_n~|~n\in\Z\}$ satisfy the following commutation relations
\begin{equation}\label{planar}
\begin{split}
&[L_m, L_n]=(n-m)L_{m+n},\quad [L_m, H_n]=nH_{m+n},\\
&[L_m, I_n]=(n-m)I_{m+n},\quad\;\, [L_m, J_n]=(n-m)J_{m+n},\\
&[H_m,I_n]=I_{m+n},\qquad\qquad\; [H_m,J_n]=-J_{m+n},\\
&[H_m, H_n]=[I_m, I_n]=[J_m, J_n]=[I_m, J_n]=0,\quad\forall m,n\in\Z.
\end{split}
\end{equation}
Moreover, from \cite{GLP} we see that the universal central extension of the planar Galilean conformal algebra is
spanned by $\{L_n, H_n,I_n,J_n, {\bf c_1,c_2,c_3}~|~n\in\Z\}$ subject to the following nontrivial commutation relations
\begin{equation}\label{extension planar}
\begin{split}
&[L_m, L_n]=(n-m)L_{m+n}+\frac{m^3-m}{12}\delta_{m+n,0}{\bf c_1},\\
&[L_m, H_n]=nH_{m+n}+m^2\delta_{m+n,0}{\bf c_2}, \quad [H_m, H_n]=m\delta_{m+n,0}{\bf c_3},\\
&[L_m, I_n]=(n-m)I_{m+n},\quad\;\, [L_m, J_n]=(n-m)J_{m+n},\\
&[H_m,I_n]=I_{m+n},\quad\qquad [H_m,J_n]=-J_{m+n}, \quad\forall m,n\in\Z.
\end{split}
\end{equation}
Now, we may describe the definition of the universal central extension of the planar Galilean conformal algebra as follows.
\begin{definition}\label{Galilean}
	The {\bf universal central extension $\GG$ of the planar Galilean conformal algebra} is an infinite-dimensional Lie algebra with a basis $\{L_n, H_n,I_n,J_n, {\bf c_1,c_2,c_3}~|~n\in\Z\}$ subject to the nontrivial commutation relations \eqref{extension planar}.
\end{definition}

It is clear that $\GG=\oplus_{i\in\Z}\GG^i$ is a $\Z$-graded Lie algebra, where
\begin{equation*}
\GG^i=\left\{
\begin{aligned}
&\C L_i+\C H_i+\C I_i+\C J_i,\ \ \qquad\qquad\qquad\qquad\quad \text{if}\ i\ne 0,\\
&\C L_0+\C H_0+\C I_0+\C J_0+\C {\bf c_1}+\C {\bf c_2}+\C {\bf c_3},\,\,  \quad \text{if}\ i=0,
\end{aligned}
\right.
\end{equation*}
and $\GG$ has a triangular decomposition $\GG=\GG^+\oplus \GG^0\oplus \GG^-$,
where $$\GG^+=\oplus_{i\in\Z_+}\GG^i,\qquad  \GG^-=\oplus_{i\in\Z_+}\GG^{-i}.$$
Furthermore, $\GG$ has the following several important subalgebras.
\begin{enumerate}
	\item The subalgebra $\VV$ spanned by $\{L_m, {\bf c_1}~|~m\in\Z\}$ is the Virasoro algebra, which was introduced by M. A. Virasoro in order to study string theory (see \cite{Vir}).
	\item The subalgebra $\LL$ spanned by $\{L_m, H_m, {\bf c_1, c_2, c_3}~|~m\in\Z\}$ is the Heisenberg-Virasoro algebra,
	which was introduced by E. Arbarello, C. De Concini, V. G. Kac and C. Procesi in \cite{ADKP}, where the authors established a canonical isomorphism between the second cohomology of the certain Lie algebra
	and the second singular cohomology of the certain moduli space.
	\item The subalgebra $\WW$ spanned by $\{L_m, I_m, {\bf c_1}~|~m\in\Z\}$ or $\{L_m, J_m, {\bf c_1}~|~m\in\Z\}$ is the $W$-algebra $W(2,2)$, which was introduced by Zhang and Dong in order to classify the certain simple vertex operator algebras (see \cite{ZD}). 
\end{enumerate}

For convenience, we define the following subalgebras of $\GG$, 
for any $m,n,d\in\N$, denote
\begin{align*}
  &\GG_d=\sum_{i\in\N}(\C L_i+\C H_i+\C I_{i-d}+\C J_{i-d})+\C {\bf c_1}+\C {\bf c_2}+\C {\bf c_3},\\
  &\GG^{(m,n)}=\sum_{i\in\N}(\C L_{m+i}+\C H_{m+i}+\C I_{n+i}+\C J_{n+i})+\C {\bf c_1}+\C {\bf c_2}+\C {\bf c_3},\\
  &\widetilde{IJ}=\sum_{i\in\Z}(\C I_{i}+\C J_{i}).
\end{align*}
Suppose that $V$ is an irreducible $\GG_d$-module, then we have the induced $\GG$-module
$$\Ind(V)=\UU(\GG)\otimes_{\UU(\GG_d)}V.$$
From Theorem 3.1 in \cite{CY} we get the following result.
\begin{lemma}\label{CY-Lemma}
	Let $d\in\N$ and $V$ be an irreducible $\GG_d$-module.
	Suppose that there exists $k\in\Z$ such that 
	\begin{enumerate}[$(1)$]
		\item $d+k\geq 0$ and $k\notin -2\Z_+$;
		\item  the actions of $I_k$ and $J_k$ on $V$ are injective;
		\item $L_iV=H_iV=I_jV=J_jV=0,\quad \forall i>k+d,j>k$.
	\end{enumerate}
	Then the induced module $\Ind(V)$ is an irreducible $\GG$-module.
\end{lemma}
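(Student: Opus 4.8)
The plan is to combine the Poincar\'e--Birkhoff--Witt theorem with a degree-reduction argument, showing that every nonzero $\GG$-submodule of $\Ind(V)$ meets the cyclic subspace $V=1\otimes V$; once this is known, irreducibility of $V$ as a $\GG_d$-module finishes the proof. First I would fix the vector-space complement $\GG_d^{-}$ to $\GG_d$ in $\GG$, namely the span of $\{L_{-i},H_{-i}\mid i\in\Z_+\}\cup\{I_{-d-i},J_{-d-i}\mid i\in\Z_+\}$; one checks directly from the definition of $\GG_d$ that $\GG=\GG_d^{-}\oplus\GG_d$ as vector spaces. By PBW, $\Ind(V)=\UU(\GG_d^{-})\otimes V$, and $V$ is a $\GG_d$-submodule on which $\GG_d$ acts through its given action. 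Hence, if $W$ is a nonzero $\GG$-submodule with $W\cap V\neq 0$, then $W\cap V$ is a nonzero $\GG_d$-submodule of the irreducible module $V$, so $V\subseteq W$ and therefore $W\supseteq\UU(\GG)V=\Ind(V)$. Everything thus reduces to the single claim $W\cap V\neq0$.

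To prove that claim I would fix a PBW-ordered monomial basis of $\UU(\GG_d^{-})$, with the $L$- and $H$-factors written to the left and the mutually commuting $I$- and $J$-factors written to the right, next to $V$, and assign to each monomial a total degree, the sum of the absolute values of the indices of its factors, refined by length and lexicographically by the multiset of indices, so as to obtain a well-order. I would then pick a nonzero $w\in W$ whose leading (largest) monomial is as small as possible. If $w\notin V$, its leading monomial $u$ is nontrivial, and I strip off a factor of $u$ of most negative index by acting with a suitable ``top'' generator: to remove an $I$- or $J$-factor of index $-d-c$ I apply $L_{k+d+c}$ or $H_{k+d+c}$, and to remove an $L$- or $H$-factor of index $-a$ an $L_m$ with $m$ chosen strictly above $k+d$.

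Choosing the index strictly above $k+d$ is exactly what makes hypothesis (3) annihilate $V$: when the operator is commuted to the right, every term in which it actually reaches the $V$-factor dies, and only straightening terms remain. The straightening term coming from the targeted lowest factor lands in $\GG_d$ precisely because $d+k\ge0$, so after it is moved onto $v$ it contributes a monomial of strictly smaller total degree, and the arithmetic in (1) keeps its coefficient nonzero; the remaining straightening terms are arranged to be strictly smaller in the order. Iterating down the total degree, I arrive at a nonzero element lying in $V$, where the injectivity of $I_k$ and $J_k$ from (2) is what guarantees that the successive reductions never collapse to zero. This contradicts the minimality of the leading monomial of $w$ unless $w\in V$ already, giving $W\cap V\neq0$.

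The main obstacle is the bookkeeping in this reduction: I must choose the total order so that, when a top operator is commuted past the entire monomial, the single intended straightening term is the unique largest survivor and is never cancelled by the cross-commutators with the other factors, all of which must be shown either to drop the total degree or to vanish by (3); and I must track that the fully reduced vector in $V$ remains nonzero, which is where the injectivity in (2) is consumed. This is also where hypothesis (1) is used in full: $d+k\ge0$ guarantees $I_k,J_k\in\GG_d$, so that (2) is even meaningful and the stripped $I,J$-factors re-enter $\GG_d$, while $k\notin -2\Z_+$ rules out the degenerate indices at which the structure constants produced in a straightening step would vanish. I expect the cleanest organization is a double induction: an outer induction on the total $I,J$-content that clears the $I$- and $J$-factors, reducing to a monomial supported on the Heisenberg--Virasoro-type factors $L_{-i},H_{-i}$, followed by an inner induction clearing those; a routine check confirms that the central elements $\mathbf{c_1},\mathbf{c_2},\mathbf{c_3}$, which appear only in degree zero, do not interfere.
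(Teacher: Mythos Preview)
The paper does not prove this lemma; it is quoted from \cite{CY} (Theorem~3.1 there), so there is no in-paper argument to compare against. Your overall architecture---a PBW basis for $\UU(\GG_d^{-})$, a well-order on monomials, and degree reduction forcing any nonzero submodule $W$ to meet $V$---is the right one and parallels the analogous in-paper arguments (Lemmas~\ref{m>n-01} and~\ref{m<n-11}).

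The concrete gap is your choice of operator for stripping the $L_{-a}$- and $H_{-a}$-factors. You propose acting by $L_m$ with $m>k+d$. But $[L_m,L_{-a}]=(-a-m)L_{m-a}$ and $[L_m,H_{-a}]=-a\,H_{m-a}$, and for $m$ large both $L_{m-a}$ and $H_{m-a}$ again annihilate $V$ by~(3); for instance $L_m\cdot(L_{-a}v)=(-a-m)L_{m-a}v=0$ for every $v\in V$ once $m>k+d+a$, so no reduction occurs at all. Moreover these structure constants never involve $k$, so this cannot be where the hypothesis $k\notin-2\Z_+$ is consumed, contrary to your last paragraph. The operators that actually work are $I_{k+a}$ and $J_{k+a}$ (which kill $V$ by~(3) since $k+a>k$): from $[L_m,I_n]=(n-m)I_{m+n}$ and $[H_m,I_n]=I_{m+n}$ one gets
\[
[I_{k+a},L_{-a}]=-(k+2a)\,I_k,\qquad [I_{k+a},H_{-a}]=-I_k,
\]
and similarly with $J$ in place of $I$. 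Now the coefficient $k+2a$ for $a\ge1$ is nonzero exactly because $k\notin-2\Z_+$, and the resulting $I_k$ (resp.\ $J_k$) acts injectively on $V$ by~(2), which is what keeps the reduced vector nonzero. This is precisely the mechanism in Lemma~\ref{m<n-11}; compare the role of Eq.~\eqref{2s-k}. With this correction your double induction goes through, though you will want the PBW order with the commuting $I,J$-block on the \emph{left} so that $I_{k+a}$ passes through it for free before reaching the $L,H$-factors.
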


Recall that for any $m,n\in \N$, let $\psi_{m,n}:\GG^{(m, n)}\rightarrow \C$ be a Whittaker function and $\C w_{\psi_{m,n}}$ be the one-dimensional module over $\GG^{(m, n)}$, defined by $x\cdot w_{\psi_{m,n}}=\psi_{m,n}(x)w_{\psi_{m,n}}$ for $x\in\GG^{(m, n)}$.
Then we have the induced $\GG$-module
$$W_{\psi_{m,n}}=\Ind_{\GG^{(m, n)}}^{\GG} \C w_{\psi_{m,n}},$$
which is called the {\bf Whittaker module} over $\GG$ with respect to $\psi_{m,n}$.

\begin{definition}
Let $\gg$ be a Lie algebra. Suppose that $f$ is an automorphism of $\gg$ and $V$ is a $\gg$-module. 
The following actions
$$x\cdot v=f(x)v,\quad \forall x\in\gg,v\in V,$$
give $V$ a new $\gg$-module structure, denoted by $V'$. Then $V$ and $V'$ are called {\bf equivalent} $\gg$-modules.
\end{definition}

It is easy to see that equivalent modules over the Lie algebra have the same irreducibility.

\begin{definition}
Let $\gg=\oplus_{i\in\Z}{\gg}_{i}$ be a $\Z$-graded Lie algebra.
A $\gg$-module $V$ is called a {\bf restricted module} if for every $v\in V$ there exists $n\in\N$ such that $\gg_iv=0$
for $i>n$. 
\end{definition}
\begin{remark}
One can define restricted modules over $\GG$ since $\GG$ is a $\Z$-graded Lie algebra.
It is clear that the Whittaker modules over $\GG$ are restricted modules.
\end{remark}

We conclude this section by reviewing the following results about $\UU(\hh)$-free modules over $\GG$.

\begin{definition}
\begin{enumerate}[$(1)$]
      \setlength{\itemindent}{-1.6em}\item For any $\lambda\in\C^*,\eta\in\C,(0\ne)\sigma\in\C[X]$, the polynomial algebra $\C[X,Y]$ has a $\GG$-module structure with the following actions
\begin{align}\label{one}
        &L_m(f(X,Y))=\lambda^{m}f(X,Y-m)(Y-mX+m\eta),\notag\\
        &H_m(f(X,Y))=\lambda^mXf(X,Y-m),\\
        &I_m(f(X,Y))=\lambda^m\sigma f(X-1,Y-m),\notag\\
        &J_m(\C[X,Y])={\bf c_1}(\C[X,Y])={\bf c_2}(\C[X,Y])={\bf c_3}(\C[X,Y])=0,\notag
\end{align}
where $m\in\Z, f(X,Y)\in \C[X,Y]$. This module is denoted by  $\Omega(\lambda,\eta,\sigma,0)$.
    \item For any $\lambda\in\C^*, \eta\in\C, (0\ne)\sigma\in\C[X]$, the polynomial algebra $\C[X,Y]$ is endowed with a $\GG$-module structure by the following actions
    \begin{equation}\label{two}
    \begin{split}
        &L_m(f(X,Y))=\lambda^{m}f(X,Y-m)(Y+mX+m\eta),\\
        &H_m(f(X,Y))=\lambda^mXf(X,Y-m),\\
        &I_m(\C[X,Y])={\bf c_1}(\C[X,Y])={\bf c_2}(\C[X,Y])={\bf c_3}(\C[X,Y])=0,\\
        &J_m(f(X,Y))=\lambda^m\sigma f(X+1,Y-m),
    \end{split}
\end{equation}
where $m\in\Z, f(X,Y)\in \C[X,Y]$.
This module is denoted by  $\Omega(\lambda,\eta,0,\sigma)$.
\item For any $\lambda\in\C^*,\delta\in\C[X]$, the polynomial algebra $\C[X,Y]$
can be endowed with a $\GG$-module structure via the following actions
\begin{equation}\label{one}
\begin{split}
&L_m(f(X,Y))=\lambda^{m}f(X,Y-m)(Y+m\delta),\\
&H_m(f(X,Y))=\lambda^mXf(X,Y-m),\\
&I_m(\C[X,Y])=J_m(\C[X,Y])={\bf c_1}(\C[X,Y])={\bf c_2}(\C[X,Y])={\bf c_3}(\C[X,Y])=0,
\end{split}
\end{equation}
for any $m\in\Z, f(X,Y)\in \C[X,Y]$. We denote this module by $\Omega(\lambda,\delta,0,0)$.
\end{enumerate}
\end{definition}
From \cite{CGZ} we see that the above three families of modules exhaust all $\UU(\hh)$-free modules of rank $1$ over $\GG$.
Moreover, by Theorems 4.13, 4.14 and 4.15 in \cite{CGZ} we have the following lemma.
\begin{lemma}\label{U(h)-irr}
	Let $\lambda\in\C^*,\eta\in\C,\sigma,\delta\in\C[X]$ with $\sigma\ne 0$. Then
	\begin{enumerate}[$(1)$]
		\item $\Omega(\lambda,\eta,\sigma,0)$ is an irreducible $\GG$-module if and only if $\sigma$ is a nonzero constant;
		\item $\Omega(\lambda,\eta,0,\sigma)$ is an irreducible $\GG$-module if and only if $\sigma$ is a nonzero constant;
		\item $\Omega(\lambda,\delta,0,0)$ is always reducible as $\GG$-module.
	\end{enumerate}
\end{lemma}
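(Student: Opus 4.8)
The plan is to exploit the observation that, in all three families, $L_0$ acts as multiplication by $Y$ and $H_0$ as multiplication by $X$ on $\C[X,Y]$, while the central elements ${\bf c_1},{\bf c_2},{\bf c_3}$ act as zero. Hence any $\GG$-submodule $W$ is automatically stable under multiplication by $X$ and by $Y$, i.e. $W$ is an ideal of the polynomial ring $\C[X,Y]$. This reduces the whole lemma to deciding when a $\GG$-stable ideal can be proper and nonzero; in particular, the module is irreducible if and only if every nonzero $\GG$-stable ideal contains $1$ (and hence equals $\C[X,Y]$).

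For the reducibility assertions I would simply exhibit proper submodules. For $\Omega(\lambda,\delta,0,0)$ in part $(3)$ the formulas show that $H_m f=\lambda^m Xf(X,Y-m)$ is always divisible by $X$ and that $L_m$ preserves divisibility by $X$ (while $I_m=J_m=0$); thus the principal ideal $X\,\C[X,Y]$ is a proper nonzero submodule, so the module is always reducible. For the ``only if'' parts of $(1)$ and $(2)$, suppose $\deg\sigma\ge 1$ and pick a root $a\in\C$ of $\sigma$. I claim the ideal $(X-a)\C[X,Y]$ is $\GG$-stable: stability under $L_m,H_m$ is immediate because these operators retain the factor $(X-a)$, while for $I_m$ one computes $I_m\big((X-a)g\big)=\lambda^m\sigma(X)(X-1-a)g(X-1,Y-m)$, which lies in $(X-a)\C[X,Y]$ precisely because $\sigma(a)=0$ forces $(X-a)\mid\sigma(X)$ (and the case $(2)$ is identical with $X-1$ replaced by $X+1$ in the $J_m$-action). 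Since $1\notin(X-a)\C[X,Y]$, this is a proper nonzero submodule, giving reducibility.

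The substantial direction is irreducibility when $\sigma$ is a nonzero constant. Let $W\ne 0$ be a $\GG$-stable ideal and $0\ne f\in W$; write $f=\sum_{j=0}^d p_j(X)Y^j$ with $p_d\ne 0$. First I would strip off the $Y$-dependence: applying $H_m$ gives $Xf(X,Y-m)\in W$ for every $m$, and forming the $d$-th finite difference $\sum_{i=0}^d(-1)^i\binom{d}{i}Xf(X,Y-i)=d!\,X\,p_d(X)$ produces a nonzero element of $\C[X]$ inside $W$. Hence $W\cap\C[X]$ is a nonzero ideal $(h)$ of the principal ideal domain $\C[X]$. Next I would apply $I_0$ (resp. $J_0$), under which $h(X)\mapsto \sigma\, h(X-1)$ (resp. $\sigma\, h(X+1)$) must again land in $(h)$; as $\sigma$ is a nonzero constant this yields $h(X)\mid h(X-1)$, and comparing degrees and leading coefficients forces $h$ to be a nonzero constant. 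Therefore $1\in W$ and $W=\C[X,Y]$, proving irreducibility.

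I expect the main obstacle to be the irreducibility direction, and within it the step that manufactures a nonzero polynomial in $X$ alone: one must combine the $H_m$-shifts into a finite difference correctly (noting the harmless common factor $X$), and then run the divisibility argument $h\mid\sigma\, h(X\mp1)$ to force $h$ to be constant. By contrast the reducibility direction is short once one notices that a root of $\sigma$ supplies exactly the factor needed to make $(X-a)\C[X,Y]$ absorb the $I_m$- or $J_m$-action. The same scheme handles $(1)$ and $(2)$ symmetrically, the only differences being the sign of the $X$-shift ($X-1$ versus $X+1$) and of the linear $X$-term in the $L_m$-weight.
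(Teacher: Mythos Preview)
Your argument is correct. Note, however, that the paper does not actually prove this lemma: it is quoted verbatim from \cite{CGZ} (Theorems~4.13--4.15 there), so there is no ``paper's own proof'' to compare against beyond the citation.

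That said, your self-contained argument is clean and worth recording. The key observation---that $L_0$ and $H_0$ act as multiplication by $Y$ and $X$, so every $\GG$-submodule is automatically an ideal of $\C[X,Y]$---reduces irreducibility to showing that any nonzero $\GG$-stable ideal contains $1$. Your reducibility constructions (the ideal $X\,\C[X,Y]$ for part~(3), and $(X-a)\C[X,Y]$ with $\sigma(a)=0$ for the ``only if'' in (1)--(2)) are correct; the point that $(X-a)\mid\sigma$ absorbs the $I_m$- or $J_m$-shift is exactly right. For the irreducibility direction, your two-step reduction---first using finite differences of the $H_m$-translates to land in $\C[X]$, then using $I_0$ (resp.\ $J_0$) to force $h(X)\mid h(X-1)$ and hence $h$ constant---is valid. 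One minor remark: in the finite-difference step you produce $d!\,X\,p_d(X)$, which carries an extra factor of $X$; this is harmless since it is still a nonzero element of $\C[X]$, but you could equally well have used $I_m$ (resp.\ $J_m$) in place of $H_m$ to avoid that factor altogether.
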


\section{Whittaker modules over $\GG$}\label{sec.3}

In this section, we determine the necessary and sufficient conditions 
for the Whittaker module $W_{\psi_{m,n}}$ over $\GG$ to be irreducible for any $m\in\Z_+, n\in\N$ with $m+n\in 2\N$.
We study some examples when $m\in\Z_+, n\in\N$ with $m+n\in 2\N+1$.
Consequently, we state a conjecture from these examples.

It is easy to see that the module $W_{\psi_{m,n}}$ is always reducible when $m=0$. Therefore we always assume $m\in\Z_+$ in the remaining parts of this section. 
Recall that for any $m\in\Z_+,n\in\N$, suppose $\psi_{m,n}:\GG^{(m, n)}\rightarrow \C$ is a Whittaker function 
and $$W_{\psi_{m,n}}=\Ind_{\GG^{(m, n)}}^{\GG} \C w_{\psi_{m,n}},$$
is  the corresponding Whittaker module over $\GG$ with respect to $\psi_{m,n}$.
Obviously, the following equations hold
\begin{equation}\label{derived-algebra}
\psi_{m,n}(L_{2m+1+j})=\psi_{m,n}(H_{2m+j})=\psi_{m,n}(I_{m+n+j})=\psi_{m,n}(J_{m+n+j})=0,\,\,\forall  j\in\N,
\end{equation}
 since $\psi_{m,n}([\GG^{(m, n)},\GG^{(m, n)}])=0$.

\begin{proposition}\label{m>n-0}
	Let $m\in\Z_+, n\in\N$ with $m\geq n$. Suppose that $\psi_{m,n}$ is given as above. If $\psi_{m,n}(I_{m+n-1})\psi_{m,n}(J_{m+n-1})\ne 0$,
	 then the induced module
	$W'=\Ind_{\GG^{(m, n)}}^{\GG^{(m,0)}} \C w_{\psi_{m,n}}$ is an irreducible module over $\GG^{(m,0)}$.
	Moreover, 
\begin{align}
&L_{m+n+i}(w)=\psi_{m,n}(L_{m+n+i})w,\ \ H_{m+n+i}(w)=\psi_{m,n}(H_{m+n+i})w, \label{m>n-1}\\
&I_{n+i}(w)=\psi_{m,n}(I_{n+i})w,\ \ J_{n+i}(w)=\psi_{m,n}(J_{n+i})w,  \ \ \ \forall w\in W', i\in\N.\label{m>n-2}
\end{align}
\end{proposition}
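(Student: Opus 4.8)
The plan is to identify $W'$ explicitly with a polynomial algebra and then to show that the associative algebra generated by the $\GG^{(m,0)}$-action already contains a full Weyl algebra. If $n=0$ then $\GG^{(m,0)}=\GG^{(m,n)}$ and $W'=\C w_{\psi_{m,n}}$ is one-dimensional, so everything is trivial; hence assume $n\ge 1$. Writing $\psi=\psi_{m,n}$ throughout, note that $I_k,J_k$ for $0\le k\le n-1$ are exactly the basis elements of $\GG^{(m,0)}$ not lying in $\GG^{(m,n)}$, and that $[I_k,I_l]=[J_k,J_l]=[I_k,J_l]=0$. By the PBW theorem this gives a vector space isomorphism
\[
W'\;\cong\;\C[x_0,\dots,x_{n-1},y_0,\dots,y_{n-1}],\qquad I_k\leftrightarrow x_k,\ \ J_k\leftrightarrow y_k,
\]
under which $w_{\psi_{m,n}}\leftrightarrow 1$ and the $I_k,J_k$ ($0\le k\le n-1$) act by multiplication.

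First I would record how the remaining generators act. Using \eqref{extension planar}, for any $s\ge m$ and $0\le k\le n-1$ one has $s+k\ge m\ge n$, so $I_{s+k},J_{s+k}\in\GG^{(m,n)}$ act on $W'$ by the scalars $\psi(I_{s+k}),\psi(J_{s+k})$. Since $I_{n+i},J_{n+i}$ commute with every $I_k,J_k$, they act as the scalars $\psi(I_{n+i}),\psi(J_{n+i})$ on all of $W'$, which is \eqref{m>n-2}. For $L_{m+n+i},H_{m+n+i}$ the bracket with any $I_k$ or $J_k$ produces an index $\ge m+n$, on which $\psi$ vanishes by \eqref{derived-algebra}; hence these operators also act by scalars on all of $W'$, giving \eqref{m>n-1}. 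Finally, after subtracting the scalar action on $w_{\psi_{m,n}}$, the Leibniz rule together with $[H_s,I_k]=I_{s+k}$, $[H_s,J_k]=-J_{s+k}$, $[L_s,I_k]=(k-s)I_{s+k}$, $[L_s,J_k]=(k-s)J_{s+k}$ shows that for $s\ge m$ the operators $H_s-\psi(H_s)$ and $L_s-\psi(L_s)$ act as the constant-coefficient first-order operators
\[
\sum_{k=0}^{n-1}\psi(I_{s+k})\partial_{x_k}-\sum_{k=0}^{n-1}\psi(J_{s+k})\partial_{y_k},\qquad
\sum_{k=0}^{n-1}(k-s)\big(\psi(I_{s+k})\partial_{x_k}+\psi(J_{s+k})\partial_{y_k}\big),
\]
since every index $s+k$ lands in the scalar range $\ge m\ge n$.

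The heart of the argument is to show these first-order operators recover every partial $\partial_{x_k},\partial_{y_k}$. I would run an induction on $k=0,1,\dots,n-1$, at step $k$ using $s=m+n-1-k\in\{m,\dots,m+n-1\}$. For this $s$ the coefficient of $\partial_{x_l}$ (resp. $\partial_{y_l}$) is nonzero only for $l\le k$, and the top coefficient at $l=k$ equals $\psi(I_{m+n-1})$ (resp. $-\psi(J_{m+n-1})$) for the $H$-operator and $(2k-m-n+1)\psi(I_{m+n-1})$ (resp. $(2k-m-n+1)\psi(J_{m+n-1})$) for the $L$-operator. Assuming $\partial_{x_l},\partial_{y_l}$ are already available for $l<k$, the two operators reduce modulo these to a $2\times 2$ system in $\partial_{x_k},\partial_{y_k}$ with determinant $2(2k-m-n+1)\psi(I_{m+n-1})\psi(J_{m+n-1})$. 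This is nonzero: the hypothesis gives $\psi(I_{m+n-1})\psi(J_{m+n-1})\ne 0$, while $m\ge n$ forces $2k-m-n+1\le 2(n-1)-m-n+1=n-m-1\le -1$ for every $k\le n-1$ (here $n\ge 1$, so $m+n\ge 2$). Hence we solve for $\partial_{x_k},\partial_{y_k}$ and the induction closes.

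Consequently the image of $\UU(\GG^{(m,0)})$ in $\mathrm{End}(W')$ contains all multiplications $x_k,y_k$ together with all partials $\partial_{x_k},\partial_{y_k}$, that is, the full Weyl algebra on $2n$ variables, which acts irreducibly on $\C[x_0,\dots,y_{n-1}]$; equivalently, given any nonzero $v\in W'$ one applies a suitable constant-coefficient differential operator to obtain a nonzero multiple of $w_{\psi_{m,n}}$ and then recovers all of $W'$ by multiplications. Thus $W'$ is irreducible. The one genuinely delicate point, and the main obstacle, is the non-degeneracy in the inductive step, namely the non-vanishing of $2k-m-n+1$ throughout $0\le k\le n-1$; this is exactly where the hypothesis $m\ge n$ enters, and it explains why the same-parity condition of the ambient theorem is not required for this intermediate statement.
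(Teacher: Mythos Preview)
Your argument is correct. Both your proof and the paper's hinge on the same arithmetic fact---that $2k-m-n+1\ne 0$ for $0\le k\le n-1$ under the hypothesis $m\ge n$---and both use the pair $(H_{m+n-1-k},L_{m+n-1-k})$ to disentangle the $I$- and $J$-contributions. The execution, however, is genuinely different. The paper proceeds combinatorially: it equips $\N^n\times\N^n$ with a principal total order, writes an arbitrary nonzero element in PBW form, and shows (Lemma~\ref{m>n-01}) that applying a suitable $H_{m+n-1-r}-\psi(H_{m+n-1-r})$ or $L_{m+n-1-s}-\psi(L_{m+n-1-s})$ strictly lowers the degree, eventually reaching $\C w_{\psi_{m,n}}$. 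Your route is more structural: you identify $W'$ with a polynomial ring, compute that the shifted $H_s,L_s$ act as explicit constant-coefficient first-order operators, and then run a triangular induction to recover each $\partial_{x_k},\partial_{y_k}$ separately, concluding via irreducibility of the Weyl algebra on polynomials. Your approach is cleaner and more conceptual for this particular step; the paper's degree-lowering framework, on the other hand, is set up so as to be reused verbatim in the next stage (Lemma~\ref{m<n-11} and Proposition~\ref{m<n-1}), where the analogous operators no longer have constant coefficients and a straightforward Weyl-algebra identification is unavailable.
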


Before giving the proof of the above proposition, for any $l\in\Z_+$, we define a total order on $\N^l\times\N^l$,
where $\N^l$ denotes the set of all vectors of the form $\mathbf{i}
:=(i_{l-1},\cdots,i_1,i_0)$ with entries in $\N$. Let $\mathbf{0}$ denote the element
$(0,\cdots,0,0)\in\N^l$ and $\epsilon_i$ denote the element
$(0,\cdots,0,1,0,\cdots,0)\in\N^l$ for $0\leq i\leq l-1$, where $1$ is in the
$i$-th position from the right.
For example, $\epsilon_0=(0,\cdots,0,1),~\epsilon_1=(0,\cdots,0,1,0)$.
For any nonzero $\bfi\in\N^l$, denote
\begin{equation*}
\bfw(\bfi)=\sum_{k=0}^{l-1}(l-k)i_k,
\end{equation*}
which is a nonnegative integer.

Let $\succ$ denote the {\bf reverse lexicographic} total order on $\N^l$,
that is, for any $\bfi,\bfj\in \N^l$,
\begin{equation*}
\bfi \succ \bfj\Longleftrightarrow {\rm\ there\ exists\ } 0\leq k\leq l-1 {\rm\ such\ that\ }  i_k>j_k {\rm\ and\ } i_s=j_s,\
\forall 0\leq s<k.
\end{equation*}
Furthermore, we can induce a {\bf principal total order} on $\N^l\times\N^l$, still denoted by $\succ$.
For ${\bf i,\bf j,\bf i',\bf j'}\in \N^l$,
\begin{equation*}
(\bf j,\bf i) \succ(\bf j',\bf i') \Longleftrightarrow
(\bf j,\bf i, \bfw(\bf i)+\bfw(\bf j))
\succ(\bf j',\bf i', \bfw(\bf i')+\bfw(\bf j')).
\end{equation*}

Now we assume $n\in\Z_+$ and take $l=n$. For $\bfi,\bfj\in\N^n$, denote
\begin{equation*}
J^{\bfj}I^{\bfi}=J_{n-1}^{j_{n-1}}\cdots J_1^{j_1}J_0^{j_0}I_{n-1}^{i_{n-1}}\cdots I_1^{i_1}I_0^{i_0}\in\UU(\GG^{(m,0)}).
\end{equation*}
By the PBW Theorem, each element of $W'$ can be uniquely written
in the form
\begin{equation}\label{eqji}
\sum_{\bfj,\bfi\in\N^n} J^{\bfj}I^{\bfi}w_{\bfj,\bfi},
\end{equation}
where all $w_{\bfj,\bfi}\in \C w_{\psi_{m,n}}$ and only finitely many of them
are nonzero. 
For any $w \in W'$ written in the form \eqref{eqji}, we
denote by $\supp(w)$ the set of all $(\bfj,\bfi)\in \N^n\times\N^n$ such that
$w_{\bfj,\bfi}\ne 0$. Furthermore, let $\deg(w)$ denote the maximal
(with respect to the principal total order on $\N^n\times\N^n$) element of $\supp(w)$, called the {\bf degree} of $w$.
Note that we always mean $w\ne 0$ whenever we write $\deg(w)$.

\begin{lemma}\label{m>n-01}
For any $w\in W'\setminus\C w_{\psi_{m,n}}$, suppose that $\deg(w)=({\bf j},{\bf i})$.
The following conclusions hold.
\begin{enumerate}[$(1)$]
\item\label{m<n-01-1} If ${\bf j}\ne {\bf 0}$, denote $r=\min\{k~|~j_k\ne 0\}$, then  $$\deg\bigg((H_{m+n-1-r}-\psi_{m,n}(H_{m+n-1-r}))w\bigg)=({\bf j}-\epsilon_r, {\bf i}).$$
\item \label{m<n-01-2} If ${\bf j}={\bf 0}, {\bf i}\ne {\bf 0}$, denote $s=\max\{k~|~i_k\ne 0\}$, then  
\begin{align*}
&\deg\bigg((H_{m+n-1-s}-\psi_{m,n}(H_{m+n-1-s}))w\bigg)=({{\bf 0},\bf i}-\epsilon_s),\\ 
{\rm or\ \ \ \ }&\deg\bigg((L_{m+n-1-s}-\psi_{m,n}(L_{m+n-1-s}))w\bigg)=({\bf 0}, {\bf i}-\epsilon_s).
\end{align*}
\end{enumerate}
\end{lemma}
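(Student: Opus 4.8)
\noindent The plan is to reduce each assertion to a commutator computation on the cyclic generator and then track leading terms with respect to the principal total order. First I would note that for $p=m+n-1-r$ in part \eqref{m<n-01-1}, or $p=m+n-1-s$ in part \eqref{m<n-01-2}, one has $p\geq m$, so $H_p,L_p\in\GG^{(m,n)}$ act on $w_{\psi_{m,n}}$ by the scalars $\psi_{m,n}(H_p),\psi_{m,n}(L_p)$. Hence for any PBW monomial,
\begin{equation*}
(X-\psi_{m,n}(X))J^{\bfj}I^{\bfi}w_{\psi_{m,n}}=[X,J^{\bfj}I^{\bfi}]w_{\psi_{m,n}},\qquad X\in\{H_p,L_p\}.
\end{equation*}
Since $[X,\cdot]$ is a derivation and the elements $I_{p+k},J_{p+k}$ commute with every $I_q,J_q$, the commutator is a sum over the factors: each $I_k$ is replaced by $I_{p+k}$ (with coefficient $1$ for $X=H_p$ and $k-p$ for $X=L_p$) and each $J_k$ by $J_{p+k}$ (coefficient $-1$, resp. $k-p$). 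After sliding the new generator onto $w_{\psi_{m,n}}$ it acts by the scalar $\psi_{m,n}(I_{p+k})$ or $\psi_{m,n}(J_{p+k})$.

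Next I would determine which scalars survive. By \eqref{derived-algebra} we have $\psi_{m,n}(I_q)=\psi_{m,n}(J_q)=0$ for $q\geq m+n$, while $p+k\geq p\geq m\geq n$ forces every shifted index into $\GG^{(m,n)}$; thus a term survives only when $p+k\leq m+n-1$, i.e. $k\leq r$ (resp. $k\leq s$). At the extreme index $k=r$ (resp. $k=s$) the shift lands exactly on $m+n-1$, where $\psi_{m,n}(J_{m+n-1})$ (resp. $\psi_{m,n}(I_{m+n-1})$) is nonzero by the hypothesis of Proposition \ref{m>n-0}. Each surviving replacement lowers one coordinate by one, sending the degree $(\bfj,\bfi)$ of the monomial to $(\bfj-\epsilon_k,\bfi)$ or $(\bfj,\bfi-\epsilon_k)$ and dropping the weight $\bfw(\bfj)+\bfw(\bfi)$ by $n-k$. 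Because the weight is the most significant coordinate of the principal order, and the drop $n-k$ is smallest at the extreme admissible index, the candidate leading monomial is $(\bfj-\epsilon_r,\bfi)$ in part \eqref{m<n-01-1} and $(\mathbf{0},\bfi-\epsilon_s)$ in part \eqref{m<n-01-2}, the tie among equal-weight competitors being broken by comparing the $\bfi$-part first.

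The real work is to show that no other monomial of $w$ interferes with this leading term. Writing $(\bfj,\bfi)=\deg(w)$, I would check that any $(\bfj',\bfi')\in\supp(w)$ whose reduction hits the candidate degree must satisfy $\bfw(\bfj')+\bfw(\bfi')\geq\bfw(\bfj)+\bfw(\bfi)$, so in fact has weight $=\bfw(\bfj)+\bfw(\bfi)$ with $k=r$; the reverse-lexicographic tie-break then forces the contributing monomial to be $(\bfj,\bfi)$ itself in part \eqref{m<n-01-1}. Its contribution to $(\bfj-\epsilon_r,\bfi)$ has coefficient $-\,j_r\,\psi_{m,n}(J_{m+n-1})\,w_{\bfj,\bfi}\neq0$, which settles \eqref{m<n-01-1} with $X=H_{m+n-1-r}$. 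In part \eqref{m<n-01-2} the same bookkeeping leaves two monomials able to feed into $(\mathbf{0},\bfi-\epsilon_s)$, namely $(\mathbf{0},\bfi)$ and $(\epsilon_s,\bfi-\epsilon_s)$; setting $a=i_s\,\psi_{m,n}(I_{m+n-1})\,w_{\mathbf{0},\bfi}\neq0$ and $b=\psi_{m,n}(J_{m+n-1})\,w_{\epsilon_s,\bfi-\epsilon_s}$, the coefficients of the candidate term produced by $H_{m+n-1-s}$ and $L_{m+n-1-s}$ are $a-b$ and $(2s-m-n+1)(a+b)$ respectively.

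This is exactly where the assumption $m+n\in2\N$ enters: it guarantees $2s-m-n+1\neq0$, so the two coefficients cannot vanish simultaneously, and whichever of $H_{m+n-1-s}$, $L_{m+n-1-s}$ gives a nonzero coefficient yields the asserted degree, explaining the ``or'' in \eqref{m<n-01-2}. The main obstacle is precisely this interference analysis: controlling, through the weight-plus-tie-break structure of the principal order, exactly which lower monomials can contribute to the target coordinate, and then invoking the parity hypothesis to defeat the potential cancellation in part \eqref{m<n-01-2}.
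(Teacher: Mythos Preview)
Your proposal is correct and follows essentially the same strategy as the paper: reduce $(X-\psi_{m,n}(X))J^{\bfj}I^{\bfi}w_{\psi_{m,n}}$ to the commutator $[X,J^{\bfj}I^{\bfi}]w_{\psi_{m,n}}$, track the weight drop $n-k$ of each surviving replacement, isolate the minimal-drop term, and in part~\eqref{m<n-01-2} use the pair $(H_{m+n-1-s},L_{m+n-1-s})$ to defeat the one possible cancellation coming from $(\epsilon_s,\bfi-\epsilon_s)$.

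One point of context to straighten out: Lemma~\ref{m>n-01} sits under Proposition~\ref{m>n-0}, whose hypothesis is $m\geq n$ with \emph{no} parity assumption. The paper obtains $2s+1-m-n\neq0$ from $m\geq n$ (since $0\leq s\leq n-1$ forces $2s+1\leq 2n-1\leq m+n-1$), not from parity. Your parity argument ($m+n$ even $\Rightarrow$ $2s+1-m-n$ odd) is also valid, and the paper makes precisely this observation in the Remark afterwards to extend the lemma to the case $m<n$ with same parity (Corollary~\ref{m<n-0}); but for the lemma as stated the operative hypothesis is $m\geq n$. You already invoked $m\geq n$ in your line ``$p+k\geq p\geq m\geq n$'', so this is just a matter of calling on it again at the final step rather than switching to parity.
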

\begin{proof}
	We write $w$ in the form \eqref{eqji}. Then we show the proof by comparing degrees.
	
	(1)  We only need to consider the $(\bfk,\bfl)\in \supp(w)$ such that $$(H_{m+n-1-r}-\psi_{m,n}(H_{m+n-1-r}))J^{\bfk}I^{\bfl}w_{\bfk,\bfl}\ne 0.$$
	For any such $(\bfk,\bfl)$, we compute
\begin{align*}
&(H_{m+n-1-r}-\psi_{m,n}(H_{m+n-1-r}))J^{\bfk}I^{\bfl}w_{\bfk,\bfl}\\
=&(H_{m+n-1-r}-\psi_{m,n}(H_{m+n-1-r}))J_{n-1}^{k_{n-1}}\cdots J_1^{k_1}J_0^{k_0}I_{n-1}^{l_{n-1}}\cdots I_1^{l_1}I_0^{l_0}w_{\bfk,\bfl}\\
=&J_{n-1}^{k_{n-1}}\cdots J_{r+1}^{k_{r+1}}[H_{m+n-1-r}, J_r^{k_r}\cdots J_0^{k_0}]I_{n-1}^{l_{n-1}}\cdots I_1^{l_1}I_0^{l_0}w_{\bfk,\bfl}\\
&+J_{n-1}^{k_{n-1}}\cdots J_1^{k_1}J_0^{k_0}I_{n-1}^{l_{n-1}}\cdots I_{r+1}^{k_{r+1}}[H_{m+n-1-r}, I_r^{l_r}\cdots I_0^{l_0}]w_{\bfk,\bfl}.
\end{align*}
Denote $\deg((H_{m+n-1-r}-\psi_{m,n}(H_{m+n-1-r}))J^{\bfk}I^{\bfl}w_{\bfk,\bfl})=(\bfk^*,\bfl^*)$.
The following results hold by simple observations.
\begin{align}
&{\bf k}\succ {\bf k}^* {\rm \ or\ } {\bf k}={\bf k}^*\label{JI-0}.\\
& {\bf l}\succ {\bf l}^*  {\rm \ or\ } {\bf l}={\bf l}^*. \label{JI-1} \\
& \bfw({\bf k}^*)+\bfw({\bf l}^*)\leq \bfw({\bf k})+\bfw({\bf l})-(n-r).\label{JI-2}  \\
&{\rm If\ } k_r\ne 0, then \deg((H_{m+n-1-r}-\psi_{m,n}(H_{m+n-1-r}))J^{\bfk}I^{\bfl}w_{\bfk,\bfl})=({\bf k}-\epsilon_r,{\bf l}).\label{JI-3} 
\end{align}
By \eqref{JI-3} we get $\deg((H_{m+n-1-r}-\psi_{m,n}(H_{m+n-1-r}))J^{\bfj}I^{\bfi}w_{\bfj,\bfi})=({\bf j}-\epsilon_r,{\bf i})$.
Now suppose that $(\bfk,\bfl)\in \supp(w)$ with $(\bfk,\bfl)\ne (\bfj,\bfi)$. 
We have the following statements by \eqref{JI-0}-\eqref{JI-3}.

If $\bfw({\bf k})+\bfw({\bf l})<\bfw({\bf j})+\bfw({\bf i})$, then 
$$\bfw({\bf k}^*)+\bfw({\bf l}^*)\leq \bfw({\bf k})+\bfw({\bf l})-(n-r)<\bfw({\bf j})+\bfw({\bf i})-(n-r)=\bfw({\bf j}-\epsilon_r)+\bfw({\bf i}).$$
Thus $({\bf j}-\epsilon_r,{\bf i})\succ (\bfk^*,\bfl^*)$.


If $\bfw({\bf k})+\bfw({\bf l})=\bfw({\bf j})+\bfw({\bf i})$ and ${\bf i}\succ {\bf l}$, then
\begin{align*}
\bfw({\bf k}^*)+\bfw({\bf l}^*)\leq \bfw({\bf k})+\bfw({\bf l})-(n-r)=\bfw({\bf j}-\epsilon_r)+\bfw({\bf i}).
\end{align*}
Moreover,  ${\bf i}\succ{\bf l}^*$ since  ${\bf l}\succ{\bf l}^*$ or ${\bf l}={\bf l}^*$.
Therefore $({\bf j}-\epsilon_r,{\bf i})\succ (\bfk^*,\bfl^*)$.

Suppose $\bfw({\bf k})+\bfw({\bf l})=\bfw({\bf j})+\bfw({\bf i}),  {\bf i}={\bf l}$ 
and ${\bf j}\succ {\bf k}$. Then $k_0=k_1=\cdots =k_{r-1}=0$.
If $k_r\ne 0$, then $(\bfk^*,\bfl^*)=({\bfk}-\epsilon_r,\bfl)$.
If $k_r=0$, then $\bfl\succ\bfl^*$.
So we always have $({\bf j}-\epsilon_r,{\bf i})\succ (\bfk^*,\bfl^*)$.

From the above arguments we conclude that $\deg\big((H_{m+n-1-r}-\psi_{m,n}(H_{m+n-1-r}))w\big)=({\bf j}-\epsilon_r, {\bf i})$.

(2) At the moment, $w=I^{\bfi}w_{\bf0,\bfi}+\sum_{\bfk,\bfl\in\N^n} J^{\bfk}I^{\bfl}w_{\bfk,\bfl}$,
where $w_{\bf0,\bfi}\ne 0$ and $(\bf0,\bfi)\succ(\bfk,\bfl)$.
If $(\epsilon_s,\bfi-\epsilon_s)\notin \supp(w)$, then by similar discussions to $(1)$ we can deduce that
$$\deg\big((H_{m+n-1-s}-\psi_{m,n}(H_{m+n-1-s}))w\big)=({{\bf 0},\bf i}-\epsilon_s).$$ 
If $(\epsilon_s,\bfi-\epsilon_s)\in \supp(w)$, 
then $w=I^{\bfi}w_{\bf0,\bfi}+J^{\epsilon_s}I^{{\bf i}-\epsilon_s}w_{\epsilon_s,{\bfi-\epsilon_s}}+{\rm \ other\ terms}$,
where $w_{\bf0,\bfi},w_{\epsilon_s,{\bfi-\epsilon_s}}\ne 0$.
Thus 
\begin{align*}
&(H_{m+n-1-s}-\psi_{m,n}(H_{m+n-1-s}))w\\
=&i_s\psi_{m,n}(I_{m+n-1})I^{{\bfi}-\epsilon_s}w_{\bf0,\bfi}-\psi_{m,n}(J_{m+n-1})I^{{\bf i}-\epsilon_s}w_{\epsilon_s,{\bfi-\epsilon_s}}+{\rm lower-terms},\\
&(L_{m+n-1-s}-\psi_{m,n}(L_{m+n-1-s}))w\\
=&i_s(2s+1-m-n)\psi_{m,n}(I_{m+n-1})I^{{\bfi}-\epsilon_s}w_{\bf0,\bfi}+(2s+1-m-n)\psi_{m,n}(J_{m+n-1})I^{{\bf i}-\epsilon_s}w_{\epsilon_s,{\bfi-\epsilon_s}}+{\rm lower-terms}.
\end{align*}
It is clear that 
\begin{align}\label{2s+1-m-n}
2s+1-m-n\ne 0
\end{align}
 since $m\geq n$ and $0\leq s\leq n-1$.
So $$(H_{m+n-1-s}-\psi_{m,n}(H_{m+n-1-s}))w=(L_{m+n-1-s}-\psi_{m,n}(L_{m+n-1-s}))w=0$$ implies that 
$\psi_{m,n}(I_{m+n-1})=\psi_{m,n}(J_{m+n-1})=0$. Contradiction.
Therefore 
\begin{align*}
&\deg\bigg((H_{m+n-1-s}-\psi_{m,n}(H_{m+n-1-s}))w\bigg)=({{\bf 0},\bf i}-\epsilon_s),\\ 
{\rm or\ \ \ \ }&\deg\bigg((L_{m+n-1-s}-\psi_{m,n}(L_{m+n-1-s}))w\bigg)=({\bf 0}, {\bf i}-\epsilon_s).
\end{align*}
This completes the proof of the lemma.
\end{proof}

Proof of the Proposition \ref{m>n-0}. If $n=0$, then it is trivial that $W'$ is an irreducible $\GG^{(m,0)}$-module. 
If $n\in\Z_+$, then $W'$ is an irreducible $\GG^{(m,0)}$-module directly follows from Lemma \ref{m>n-01}.
The remaining results are clear.

\begin{remark}
In the proof of Lemma \ref{m>n-01} we do not use the $m\geq n$ except Eq.~\eqref{2s+1-m-n}. So the Lemma \ref{m>n-01} still holds when $m<n$ and $m,n$ have the same parity. Therefore the following corollary holds.
\end{remark}

\begin{corollary}\label{m<n-0}
	Let $m,n\in\Z_+$ with $m<n$. Suppose that $m,n$ have the same parity and $\psi_{m,n}$ is given as above. If $\psi_{m,n}(I_{m+n-1})\psi_{m,n}(J_{m+n-1})\ne 0$,
	then the induced module
	$W'=\Ind_{\GG^{(m, n)}}^{\GG^{(m,0)}} \C w_{\psi_{m,n}}$ is an irreducible module over $\GG^{(m,0)}$.
	Furthermore, the Eqs.~\eqref{m>n-1} and \eqref{m>n-2} hold.
\end{corollary}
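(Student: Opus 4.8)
The plan is to observe that this corollary is the proof of Proposition \ref{m>n-0} reread with a single substitution, exactly as the preceding remark indicates. First I would trace through the proof of Lemma \ref{m>n-01} and confirm that the ordering hypothesis $m\geq n$ is never used in part (1) nor in the first case of part (2): those arguments rest only on the commutation relations \eqref{extension planar}, the PBW normal form \eqref{eqji}, and the degree estimates \eqref{JI-0}--\eqref{JI-3} for the principal total order, all of which are insensitive to whether $m$ exceeds $n$. The hypothesis $m\geq n$ enters the original proof at one and only one point, the nonvanishing $2s+1-m-n\ne 0$ recorded in \eqref{2s+1-m-n}, which is what forbids the two displayed leading coefficients (those of $(H_{m+n-1-s}-\psi_{m,n}(H_{m+n-1-s}))w$ and of $(L_{m+n-1-s}-\psi_{m,n}(L_{m+n-1-s}))w$) from vanishing simultaneously.

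Next I would re-derive \eqref{2s+1-m-n} from the new hypothesis. Since $m,n\in\Z_+$ share the same parity, $m+n$ is even, while $2s+1$ is odd for every $s\in\N$; hence $2s+1-m-n$ is odd and therefore nonzero, for all $0\leq s\leq n-1$. This replaces the estimate $2s+1\leq 2n-1<2n\leq m+n$ that was available only under $m\geq n$, and it is the lone ingredient that the regime $m<n$ would otherwise lack. With \eqref{2s+1-m-n} reinstated, part (2) of Lemma \ref{m>n-01} holds verbatim: assuming $\psi_{m,n}(I_{m+n-1})\psi_{m,n}(J_{m+n-1})\ne 0$, at least one of the two leading coefficients survives, producing the asserted degree drop from $(\mathbf{0},\mathbf{i})$ to $(\mathbf{0},\mathbf{i}-\epsilon_s)$. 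Thus Lemma \ref{m>n-01} is valid in full under the standing hypotheses of the corollary.

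Finally I would conclude irreducibility exactly as in the proof of Proposition \ref{m>n-0}. For $n=0$ the statement is trivial, and for $n\in\Z_+$ one takes any nonzero $w\in W'\setminus\C w_{\psi_{m,n}}$ and applies Lemma \ref{m>n-01} repeatedly: part (1) strictly lowers the degree until the $\mathbf{j}$-component is $\mathbf{0}$, after which part (2) strictly lowers it until the $\mathbf{i}$-component is $\mathbf{0}$, all while remaining inside $\UU(\GG^{(m,0)})w$. Hence $w_{\psi_{m,n}}$ lies in every nonzero submodule, forcing $W'$ to be irreducible over $\GG^{(m,0)}$. Equations \eqref{m>n-1} and \eqref{m>n-2} then follow by commuting $L_{m+n+i},H_{m+n+i},I_{n+i},J_{n+i}$ past a PBW word $J^{\mathbf{j}}I^{\mathbf{i}}$: every resulting bracket carries an index in $m+n+\N$, where $\psi_{m,n}$ vanishes by \eqref{derived-algebra}, so the scalar action survives unchanged, and this computation never sees the ordering of $m$ and $n$. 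The only genuinely substantive point, the ``hard part'' such as it is, is recognizing that the parity condition is precisely what keeps $2s+1-m-n$ off zero across the whole range $0\leq s\leq n-1$; dropping it allows $m+n$ odd with $s=(m+n-1)/2$, at which the coefficient degenerates, and this is exactly the obstruction underlying the opposite-parity conjecture mentioned in the introduction.
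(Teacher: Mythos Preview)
Your proposal is correct and follows essentially the same approach as the paper: the paper's justification is precisely the preceding Remark, which observes that $m\geq n$ enters the proof of Lemma \ref{m>n-01} only through Eq.~\eqref{2s+1-m-n}, and that same-parity forces $m+n$ even so that $2s+1-m-n$ is odd and hence nonzero. Your write-up simply unpacks this remark with more detail (and the case $n=0$ you mention is vacuous here since $m<n$ with $m\in\Z_+$ forces $n\geq 2$, but this is harmless).
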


\begin{proposition}\label{m<n-1}
	Let $m\in\Z_+$ and $V$ be an irreducible $\GG^{(m,0)}$-module. 
	Suppose that there exists an odd number $k\geq m-1$ such that
	\begin{align}
	&L_{k+j}(v)=H_{k+j}(v)=I_{k+j}(v)=J_{k+j}(v)=0, \ \ \ \forall v\in V,\  j\in\Z_+, \label{2m-1}\\
	&I_{k-i}(v)=\alpha_{k-i}v,\ \ J_{k-i}(v)=\beta_{k-i}v, \ \ \ \ \  \forall v\in V,\  0\leq i\leq m-1,\label{2m-2}
	\end{align}
	where $\alpha_{k-i}, \beta_{k-i}\in\C$ for any $0\leq i\leq m-1$ and $\alpha_k\beta_k\ne 0$.
Then the induced module $V_0={\rm Ind}_{\GG^{(m,0)}}^{\GG_0}V$ is an irreducible $\GG_0$-module.
Moreover, 
\begin{enumerate}[$(1)$]
	\item  the actions of $I_k$ and $J_k$ on $V_0$ are injective;
	\item $L_sV_0=H_sV_0=I_sV_0=J_sV_0=0$ for any $s>k$.
\end{enumerate}
\end{proposition}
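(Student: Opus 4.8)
The plan is to realize $V_0$ by PBW and then run a degree–lowering argument parallel to Lemma~\ref{m>n-01}. Since $\GG_0=\GG^{(m,0)}\oplus\mathfrak n$ as vector spaces, where $\mathfrak n=\sum_{i=0}^{m-1}(\C L_i+\C H_i)$, the PBW theorem gives $V_0\cong\UU(\mathfrak n)\otimes V$; writing $\mathbf a=(a_{m-1},\dots,a_0),\mathbf b=(b_{m-1},\dots,b_0)\in\N^m$ and $L^{\mathbf a}H^{\mathbf b}=L_{m-1}^{a_{m-1}}\cdots L_0^{a_0}H_{m-1}^{b_{m-1}}\cdots H_0^{b_0}$, every element of $V_0$ is uniquely $\sum_{\mathbf a,\mathbf b}L^{\mathbf a}H^{\mathbf b}v_{\mathbf a,\mathbf b}$ with $v_{\mathbf a,\mathbf b}\in V$. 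I take $l=m$ and equip $\N^m\times\N^m$ with a principal total order built from the weight $\bfw(\mathbf a)=\sum_{j=0}^{m-1}(m-j)a_j$ (so the index $j$ carries weight $m-j$, and the top index $m-1$ is the lightest): order first by $\bfw(\mathbf a)+\bfw(\mathbf b)$, then reverse-lexicographically by $\mathbf b$, then by $\mathbf a$, and define $\supp(w),\deg(w)$ accordingly. Note $k-i\ge 0$ for $0\le i\le m-1$ because $k\ge m-1$, so each $I_{k-i},J_{k-i}$ lies in $\GG^{(m,0)}$ and acts on $V$ by the scalars $\alpha_{k-i},\beta_{k-i}$ of \eqref{2m-2}, while $L_s,H_s,I_s,J_s$ act as $0$ on $V$ for $s>k$ by \eqref{2m-1}.

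The heart of the argument is a degree–lowering lemma: if $w\in V_0\setminus V$ has $\deg(w)=(\mathbf a,\mathbf b)$ and $i^\ast=\max\{j:a_j\ne0\text{ or }b_j\ne0\}$, then $(I_{k-i^\ast}-\alpha_{k-i^\ast})w$ is nonzero of strictly smaller degree, equal to $(\mathbf a-\epsilon_{i^\ast},\mathbf b)$ when $a_{i^\ast}\ne0$ and to $(\mathbf a,\mathbf b-\epsilon_{i^\ast})$ when $a_{i^\ast}=0$. The relevant commutators are
\begin{equation*}
[I_{k-i^\ast},L_i]=(i+i^\ast-k)I_{i+k-i^\ast},\qquad [I_{k-i^\ast},H_i]=-I_{i+k-i^\ast},
\end{equation*}
so when $I_{k-i^\ast}$ is moved rightward through a monomial it can only delete factors of index $\le i^\ast$: a factor of index $j>i^\ast$ produces $I_{j+k-i^\ast}$ with $j+k-i^\ast>k$, which kills $V$; a factor of index $i^\ast$ produces $I_k$, acting by $\alpha_k$; and a factor of index $j<i^\ast$ produces a strictly lower-weight scalar. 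Deleting an index-$i^\ast$ factor $L_{i^\ast}$ (resp.\ $H_{i^\ast}$) carries coefficient $2i^\ast-k$ (resp.\ $-1$), and this is where the hypothesis that $k$ is odd enters decisively: $2i^\ast-k\ne0$, so the leading coefficient $(2i^\ast-k)a_{i^\ast}\alpha_k$ (resp.\ $-b_{i^\ast}\alpha_k$) is nonzero because $\alpha_k\ne0$. Subtracting $\alpha_{k-i^\ast}$ removes the degree-preserving pass-through term, and the chosen weighted order makes deletion of an $L_{i^\ast}$ the leading outcome when $a_{i^\ast}\ne0$ and deletion of an $H_{i^\ast}$ the leading outcome otherwise.

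The main obstacle — exactly as in \eqref{JI-0}--\eqref{JI-3} — is verifying that the leading monomial of $w$ genuinely dominates: I must show every other $(\mathbf c,\mathbf d)\in\supp(w)$ contributes only terms of degree strictly below the claimed $\deg$, so that no cancellation can occur. This rests on the weight estimate that $I_{k-i^\ast}$ lowers $\bfw(\mathbf a)+\bfw(\mathbf b)$ by at least $m-i^\ast$ on every monomial, combined with a case split (weight strictly smaller; weight equal with $\mathbf b$ strictly smaller; weight equal, $\mathbf b$ equal, $\mathbf a$ strictly smaller) mirroring the three cases in the proof of Lemma~\ref{m>n-01}. Granting the lemma, irreducibility follows in the standard way: for a nonzero submodule $W$ choose $0\ne w\in W$ of minimal degree; if $\deg(w)\ne(\mathbf 0,\mathbf 0)$ the lemma yields a nonzero element of $W$ of smaller degree, a contradiction, so the minimal element lies in $V$ and $W\cap V\ne0$; since $W\cap V$ is a nonzero $\GG^{(m,0)}$-submodule of the irreducible $V$ we get $V\subseteq W$, whence $W\supseteq\UU(\GG_0)V=V_0$.

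Finally, the two ``Moreover'' assertions follow by the same commutation bookkeeping. For (2), given $s>k$ and $X\in\{L,H,I,J\}$, moving $X_s$ rightward through $L^{\mathbf a}H^{\mathbf b}$ produces only generators of index $>k$ (no central term arises, since $s>0$ forces every resulting index to be positive), and these annihilate $V$; an induction on $|\mathbf a|+|\mathbf b|$ then gives $X_sV_0=0$. For (1), the computation above with $i^\ast=0$ shows $I_k=\alpha_k\,\mathrm{id}+N$, where $N$ strictly lowers degree because it only deletes $L_0$ or $H_0$ factors; hence on the top-degree component of any $0\ne w$ the operator $I_k$ acts by the nonzero scalar $\alpha_k$, so $I_kw\ne0$ and $I_k$ is injective, and symmetrically $J_k$ is injective using $\beta_k\ne0$.
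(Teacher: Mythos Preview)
Your degree-lowering lemma is false as stated: using $I_{k-i^\ast}$ alone does \emph{not} always produce a nonzero element when $a_{i^\ast}=0$. Take $m=1$ (so $\mathfrak n=\C L_0\oplus\C H_0$ and $i^\ast=0$), fix any nonzero $v\in V$, and set $w=L_0v-kH_0v\in V_0\setminus V$. Using $[I_k,L_0]=-kI_k$ and $[I_k,H_0]=-I_k$ one gets
\[
(I_k-\alpha_k)(L_0v)=-k\alpha_kv,\qquad (I_k-\alpha_k)(H_0v)=-\alpha_kv,
\]
so $(I_k-\alpha_k)w=-k\alpha_kv-k(-\alpha_kv)=0$. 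In your ordering $\deg(w)=((0),(1))$ with $a_{i^\ast}=0$, so your lemma predicts a nonzero result of degree $((0),(0))$; it is zero. The source of the error is exactly the ``interfering'' monomial $(\mathbf a+\epsilon_{i^\ast},\mathbf b-\epsilon_{i^\ast})$: it has the same weight as $(\mathbf a,\mathbf b)$ but smaller $\mathbf b$-component, hence can sit in $\supp(w)$, and removing its $L_{i^\ast}$ lands on the same PBW monomial $(\mathbf a,\mathbf b-\epsilon_{i^\ast})$ as removing $H_{i^\ast}$ from the leading term, with opposite effect.

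The paper's Lemma~\ref{m<n-11} handles precisely this obstruction by bringing in $J_{k-s}$ in addition to $I_{k-s}$. Because $[I_p,H_q]=-I_{p+q}$ whereas $[J_p,H_q]=+J_{p+q}$, the two leading contributions that cancel for $I$ add up for $J$ (and vice versa), so at least one of $(I_{k-s}-\alpha_{k-s})w$ and $(J_{k-s}-\beta_{k-s})w$ has the claimed degree; this is where the hypothesis $\beta_k\ne0$ is actually used. In the counterexample above, $(J_k-\beta_k)w=-2k\beta_kv\ne0$. Your proofs of the two ``Moreover'' items are fine; only the irreducibility step needs this repair.
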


Before giving the proof of the above proposition,
 we define a total order $\succ$ on $\N^m\times\N^m$ as above.
For ${\bf i,\bf j,\bf i',\bf j'}\in \N^m$,
\begin{equation*}
(\bf j,\bf i) \succ(\bf j',\bf i') \Longleftrightarrow
(\bf j,\bf i, \bfw(\bf i)+\bfw(\bf j))
\succ(\bf j',\bf i', \bfw(\bf i')+\bfw(\bf j')).
\end{equation*}
Let $\bfh,\bfl\in\N^m$. Denote
\begin{equation*}
H^{\bfh}L^{\bfl}=H_{m-1}^{h_{m-1}}\cdots H_1^{h_1}H_0^{h_0}L_{m-1}^{l_{m-1}}\cdots L_1^{l_1}L_0^{l_0}\in\UU(\GG_0).
\end{equation*}
By the PBW Theorem, each element of $V_0$ can be uniquely written
in the form
\begin{equation}\label{eqhl}
\sum_{\bfh,\bfl\in\N^m} H^{\bfh}L^{\bfl}v_{\bfh,\bfl},
\end{equation}
where all $v_{\bfh,\bfl}\in V$ and only finitely many of them
are nonzero. 
For any $v \in V_0$ written in the form \eqref{eqhl}, we
denote by $\supp(v)$ the set of all $(\bfh,\bfl)\in \N^m\times\N^m$ such that
$v_{\bfh,\bfl}\ne 0$. Furthermore, let $\deg(v)$ denote the maximal
(with respect to the principal total order on $\N^m\times\N^m$) element of $\supp(v)$, called the {\bf degree} of $v$.

\begin{lemma}\label{m<n-11}
	For any $v\in V_0\setminus V$, suppose that $\deg(v)=({\bf h},{\bf l})$.
	The following statements hold.
	\begin{enumerate}[$(1)$]
		\item \label{m<n-11-1}If ${\bf h}\ne {\bf 0}$, denote $r=\min\{i~|~h_i\ne 0\}$, 
		then  $$\deg\bigg((I_{k-r}-\alpha_{k-r})v\bigg)=({\bf h}-\epsilon_r, {\bf l}).$$
		\item \label{m<n-11-2}If ${\bf h}={\bf 0}, {\bf l}\ne {\bf 0}$, denote $s=\max\{i~|~l_i\ne 0\}$, then  
		\begin{align*}
		\deg\bigg((I_{k-s}-\alpha_{k-s})v\bigg)=({{\bf 0},\bf l}-\epsilon_s),
		{\rm\ \  or\ \ \ \ }&\deg\bigg((J_{k-s}-\beta_{k-s})v\bigg)=({\bf 0}, {\bf l}-\epsilon_s).
		\end{align*}
	\end{enumerate}
\end{lemma}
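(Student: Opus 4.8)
The plan is to mirror, almost verbatim, the proof of Lemma \ref{m>n-01}, exchanging the roles of the two pairs of generators: here the module $V_0$ is built from the monomials $H^{\bfh}L^{\bfl}$ of \eqref{eqhl}, and the test operators are $I_{k-r}$ and $J_{k-s}$ in place of $H_{m+n-1-r}$ and $L_{m+n-1-s}$. Writing $v$ in the form \eqref{eqhl}, I would analyze $\deg\big((I_{k-r}-\alpha_{k-r})H^{\bfh'}L^{\bfl'}v_{\bfh',\bfl'}\big)$ for each $(\bfh',\bfl')\in\supp(v)$ by commuting the test operator to the right. The only commutators needed come from \eqref{extension planar}:
\[
[I_{k-r},H_j]=-I_{k-r+j},\quad [I_{k-r},L_j]=(j-k+r)I_{k-r+j},\quad [J_{k-s},H_j]=J_{k-s+j},\quad [J_{k-s},L_j]=(j-k+s)J_{k-s+j}.
\]
The guiding principle throughout is that an $I$ or $J$ of index strictly above $k$ annihilates $V$ by \eqref{2m-1}, one of index in $[k-(m-1),k]$ acts on $V$ by the scalar $\alpha$ or $\beta$ of \eqref{2m-2}, and the straight-through term is cancelled by the subtraction of $\alpha_{k-r}$ (resp. $\beta_{k-s}$).

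For part (1), commuting $I_{k-r}$ past the factor $H_r$ of the leading monomial produces the scalar $-I_k=-\alpha_k$ and lowers $h_r$, yielding the candidate leading term $(\bfh-\epsilon_r,\bfl)$ with nonzero coefficient $-h_r\alpha_k v_{\bfh,\bfl}$. Commuting past $H_j$ or $L_j$ with $j>r$ raises the index above $k$ and contributes $0$; commuting past $L_j$ with $j<r$ lowers the total weight by $m-j>m-r$ and is therefore smaller; and the one competing term at the same total weight, namely $(\bfh,\bfl-\epsilon_r)$ coming from the $L_r$-commutator, is strictly below $(\bfh-\epsilon_r,\bfl)$ in the principal order, since the two share the same total weight but $\bfl\succ\bfl-\epsilon_r$. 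I would then record the exact analogues of \eqref{JI-0}--\eqref{JI-3} and repeat the three-case degree comparison (strictly smaller total weight; equal total weight with $\bfl\succ\bfl'$; equal total weight with $\bfl=\bfl'$ and $\bfh\succ\bfh'$) to conclude that every remaining monomial of $\supp(v)$ contributes a term of degree $\prec(\bfh-\epsilon_r,\bfl)$, giving $\deg\big((I_{k-r}-\alpha_{k-r})v\big)=(\bfh-\epsilon_r,\bfl)$.

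For part (2), with $\bfh=\mathbf{0}$ the candidate leading term $(\mathbf{0},\bfl-\epsilon_s)$ of $(I_{k-s}-\alpha_{k-s})v$ arises from commuting $I_{k-s}$ past $L_s$, with coefficient $l_s(2s-k)\alpha_k v_{\mathbf{0},\bfl}$; here $2s-k\neq0$ precisely because $k$ is odd, which is the exact analogue of \eqref{2s+1-m-n} and the single place the parity hypothesis enters. Exactly as in Lemma \ref{m>n-01}, the only other monomial of $\supp(v)$ that can contribute at this degree is $(\epsilon_s,\bfl-\epsilon_s)$ (when it lies in $\supp(v)$), through $[I_{k-s},H_s]=-\alpha_k$, so the total coefficient at $(\mathbf{0},\bfl-\epsilon_s)$ is $\alpha_k\big(l_s(2s-k)v_{\mathbf{0},\bfl}-v_{\epsilon_s,\bfl-\epsilon_s}\big)$, while the corresponding coefficient for $J_{k-s}$ is $\beta_k\big(l_s(2s-k)v_{\mathbf{0},\bfl}+v_{\epsilon_s,\bfl-\epsilon_s}\big)$. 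If both vanished, adding and subtracting would force $v_{\epsilon_s,\bfl-\epsilon_s}=0$ and then $l_s(2s-k)v_{\mathbf{0},\bfl}=0$, whence $v_{\mathbf{0},\bfl}=0$ (using $\alpha_k\beta_k\neq0$, $l_s\neq0$, $2s-k\neq0$), contradicting $(\mathbf{0},\bfl)\in\supp(v)$. Hence at least one of $I_{k-s}$ and $J_{k-s}$ realizes degree $(\mathbf{0},\bfl-\epsilon_s)$, which is the asserted dichotomy.

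I expect the main obstacle to be purely the bookkeeping in part (1): setting up the analogues of \eqref{JI-0}--\eqref{JI-3} and verifying that the $H$-commutator term $(\bfh-\epsilon_r,\bfl)$ genuinely dominates the competing $L$-commutator term $(\bfh,\bfl-\epsilon_r)$ under the principal order. By contrast, the apparent cancellation in part (2) is dispatched cleanly by the short linear-algebra argument above, with the oddness of $k$ doing the essential work through $2s-k\neq0$.
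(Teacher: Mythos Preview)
Your proposal is correct and follows essentially the same route as the paper: the paper defers item~(1) entirely to the analogous argument in Lemma~\ref{m>n-01}(1), and for item~(2) it performs exactly the computation you describe, obtaining the leading coefficients $(2s-k)\alpha_k l_s v_{\mathbf 0,\bfl}\mp\alpha_k v_{\epsilon_s,\bfl-\epsilon_s}$ (and the $\beta_k$ counterparts) and using $\alpha_k\beta_k\neq0$ together with $2s-k\neq0$ (from $k$ odd) to rule out simultaneous vanishing. Your identification of the single competing monomial $(\epsilon_s,\bfl-\epsilon_s)$ and the parity role of $k$ match the paper precisely.
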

\begin{proof}
	The item \ref{m<n-11-1} is similar to item \ref{m<n-01-1} of Lemma \ref{m>n-01}.  
	We only prove the item \ref{m<n-11-2} which involves that $k$ is an odd number.
	In the case, we can write  $$v=L^{\bfl}v_{\bf0,\bfl}+\sum_{\bfh,\bfk\in\N^m} H^{\bfh}L^{\bfk}v_{\bfh,\bfk},$$
	where $v_{\bf0,\bfl}\ne 0$ and $(\bf0,\bfl)\succ(\bfh,\bfk)$.
	If $(\epsilon_s,\bfl-\epsilon_s)\notin \supp(v)$, then we can deduce that
	$$(I_{k-s}-\alpha_{k-s})v
	=(2s-k)\al_kl_sL^{{\bfl}-\epsilon_s}v_{\bf0,\bfl}+{\rm lower-terms}.$$ 
	Since $k$ is an odd number, thus 
	\begin{align}\label{2s-k}
	2s-k\ne 0.
	\end{align}
	 So $$\deg\bigg((I_{k-s}-\alpha_{k-s})v\bigg)=({{\bf 0},\bf l}-\epsilon_s).$$
	If $(\epsilon_s,\bfl-\epsilon_s)\in \supp(v)$, 
	then $v=L^{\bfl}v_{\bf0,\bfl}+H^{\epsilon_s}L^{{\bf l}-\epsilon_s}v_{\epsilon_s,{\bfl-\epsilon_s}}+{\rm \ other\ terms}$,
	where $v_{\bf0,\bfl},v_{\epsilon_s,{\bfl-\epsilon_s}}\ne 0$.
So 
	\begin{align*}
	&(I_{k-s}-\alpha_{k-s})v
	=(2s-k)\al_kl_sL^{{\bfl}-\epsilon_s}v_{\bf0,\bfl}-\al_kL^{{\bf l}-\epsilon_s}v_{\epsilon_s,{\bfl-\epsilon_s}}+{\rm lower-terms},\\
	&(J_{k-s}-\beta_{k-s})v
	=(2s-k)\beta_kl_sL^{{\bfl}-\epsilon_s}v_{\bf0,\bfl}+\beta_kL^{{\bf l}-\epsilon_s}v_{\epsilon_s,{\bfl-\epsilon_s}}+{\rm lower-terms},
	\end{align*}
	which indicate that $(I_{k-s}-\alpha_{k-s})v=(J_{k-s}-\beta_{k-s})v=0$ is impossible.
	Therefore 
\begin{align*}
\deg\bigg((I_{k-s}-\alpha_{k-s})v\bigg)=({{\bf 0},\bf l}-\epsilon_s),
{\rm\ \  or\ \ \ \ }&\deg\bigg((J_{k-s}-\beta_{k-s})v\bigg)=({\bf 0}, {\bf l}-\epsilon_s).
\end{align*}
	This completes the proof of the lemma.
\end{proof}

Now it is clear that Proposition \ref{m<n-1} holds from Lemma \ref{m<n-11}.

\begin{remark}
	In the proof of Lemma \ref{m<n-11} we do not use that $k$ is an odd number except Eq.~\eqref{2s-k}. So the Lemma \ref{m<n-11} still holds when $k\geq 2m$. Therefore we have the following corollary.
\end{remark}

\begin{corollary}\label{m<n-12}
	Let $m\in\Z_+$ and $V$ be an irreducible $\GG^{(m,0)}$-module. 
	Suppose that there exists $k\geq 2m$ such that Eqs.~\eqref{2m-1} and \eqref{2m-2} hold.
	Then the induced module $V_0={\rm Ind}_{\GG^{(m,0)}}^{\GG_0}V$ is an irreducible $\GG_0$-module.
	Moreover, 
	\begin{enumerate}[$(1)$]
		\item  the actions of $I_k$ and $J_k$ on $V_0$ are injective;
		\item $L_sV_0=H_sV_0=I_sV_0=J_sV_0=0$ for any $s>k$.
	\end{enumerate}
\end{corollary}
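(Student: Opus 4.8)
The strategy is to recycle the proof of Proposition \ref{m<n-1} essentially word for word, so the only thing I really need to check is that the inequality \eqref{2s-k} survives the change of hypothesis. Recall that Proposition \ref{m<n-1} was deduced entirely from the degree formulas of Lemma \ref{m<n-11}, and that in the proof of that lemma the assumption ``$k$ odd'' was invoked at exactly one spot, namely to guarantee $2s-k\ne 0$ in \eqref{2s-k}. Every other estimate there is formal degree bookkeeping in the variables $\bfh,\bfl\in\N^m$ and is completely insensitive to the arithmetic nature of $k$.

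First I would verify \eqref{2s-k} under the new assumption $k\ge 2m$. In item \ref{m<n-11-2} of Lemma \ref{m<n-11} the index is $s=\max\{i\mid l_i\ne0\}$ with $\bfl\in\N^m$, hence $0\le s\le m-1$ and therefore
\begin{equation*}
2s\le 2m-2<2m\le k,
\end{equation*}
so that $2s-k<0$ and in particular $2s-k\ne0$. This replaces the role played by the oddness of $k$ in Proposition \ref{m<n-1}. Granting this, the proof of Lemma \ref{m<n-11} applies verbatim: item \ref{m<n-11-1} never used the value of $k$, and in item \ref{m<n-11-2} the two expansions of $(I_{k-s}-\alpha_{k-s})v$ and $(J_{k-s}-\beta_{k-s})v$ force $(I_{k-s}-\alpha_{k-s})v=(J_{k-s}-\beta_{k-s})v=0$ to be impossible precisely because the coefficient $2s-k$ is nonzero. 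Thus Lemma \ref{m<n-11} holds with ``$k$ odd'' replaced by ``$k\ge 2m$''.

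Then, exactly as in the (clear) passage from Lemma \ref{m<n-11} to Proposition \ref{m<n-1}, irreducibility of $V_0$ follows: given a nonzero $\GG_0$-submodule $N$, pick $v\in N\setminus\{0\}$ of minimal degree; if $v\notin V$ the degree-lowering operators of Lemma \ref{m<n-11} produce a nonzero element of $N$ of strictly smaller degree, a contradiction, so $N\cap V\ne0$; since $\GG^{(m,0)}\subseteq\GG_0$ and $V$ is an irreducible $\GG^{(m,0)}$-module that generates $V_0$, we conclude $N=V_0$. Finally the two supplementary claims are read off from the hypotheses. Claim (2) is immediate from \eqref{2m-1}: commuting $L_s,H_s,I_s,J_s$ (for $s>k$) to the right past $H^{\bfh}L^{\bfl}$ produces only operators of index $\ge s>k>0$, so no central term appears and every such operator annihilates $V$ by \eqref{2m-1}. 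For the injectivity in claim (1), moving $I_k$ (resp. $J_k$) to the right past $H^{\bfh}L^{\bfl}$ shows, via \eqref{2m-1} and \eqref{2m-2}, that $I_kv$ (resp. $J_kv$) has the same degree as $v$ with leading coefficient $\alpha_k$ (resp. $\beta_k$), which is nonzero since $\alpha_k\beta_k\ne0$. I do not anticipate any genuine difficulty here: the entire substance is the one-line inequality $2s-k<0$, after which the argument of Proposition \ref{m<n-1} transfers without change.
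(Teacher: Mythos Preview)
Your proposal is correct and follows exactly the paper's own approach: the paper simply remarks that the oddness of $k$ is used only to secure \eqref{2s-k}, and that $k\ge 2m$ together with $0\le s\le m-1$ gives $2s\le 2m-2<k$, whence Lemma \ref{m<n-11} and Proposition \ref{m<n-1} carry over unchanged. You have additionally spelled out the routine verification of items (1) and (2), which the paper leaves implicit.
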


Now we state the main results of this section.

\begin{theorem}\label{main-1}
	Let $m\in\Z_+, n\in\N$ and $m,n$ have the same parity.
	Suppose that $\psi_{m,n}:\GG^{(m, n)}\rightarrow \C$ is a Whittaker function.
	Then the corresponding Whittaker $\GG$-module $W_{\psi_{m,n}}=\Ind_{\GG^{(m, n)}}^{\GG} \C w_{\psi_{m,n}}$
	is irreducible if and only if $\psi_{m,n}(I_{m+n-1})\psi_{m,n}(J_{m+n-1})\ne 0$.
	In particular, the result agrees with \cite{CYY} when $m=n=1$.
\end{theorem}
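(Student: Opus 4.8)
The plan is to realise $W_{\psi_{m,n}}$ as an iterated induced module and dismantle it with the three results already in place. By transitivity of induction,
\[
W_{\psi_{m,n}}\;=\;\Ind_{\GG_0}^{\GG}\Big(\Ind_{\GG^{(m,0)}}^{\GG_0}\big(\Ind_{\GG^{(m,n)}}^{\GG^{(m,0)}}\C w_{\psi_{m,n}}\big)\Big),
\]
so it suffices to control irreducibility stage by stage. Throughout I set $k=m+n-1$; since $m,n$ have the same parity, $k$ is odd, and $k\geq m-1$. This value is forced: by \eqref{derived-algebra}, $I_k=I_{m+n-1}$ is the last $I$ on which $\psi_{m,n}$ may be nonzero, so $k$ is the only index where a top scalar $\alpha_k\neq0$ is available.

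For sufficiency, assume $\psi_{m,n}(I_{m+n-1})\psi_{m,n}(J_{m+n-1})\neq0$. First, $W':=\Ind_{\GG^{(m,n)}}^{\GG^{(m,0)}}\C w_{\psi_{m,n}}$ is irreducible over $\GG^{(m,0)}$ by Proposition \ref{m>n-0} (if $m\geq n$) or Corollary \ref{m<n-0} (if $m<n$), and \eqref{m>n-1}--\eqref{m>n-2} record the scalar actions $I_sW'=\psi_{m,n}(I_s)W'$, $J_sW'=\psi_{m,n}(J_s)W'$ for $s\geq n$ and $L_sW'=\psi_{m,n}(L_s)W'$, $H_sW'=\psi_{m,n}(H_s)W'$ for $s\geq m+n$. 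Thus on $W'$ the hypotheses \eqref{2m-2} hold with $\alpha_{k-i}=\psi_{m,n}(I_{m+n-1-i})$, $\beta_{k-i}=\psi_{m,n}(J_{m+n-1-i})$ and $\alpha_k\beta_k\neq0$, and the $I,J$-parts of \eqref{2m-1} hold since $\psi_{m,n}(I_s)=\psi_{m,n}(J_s)=0$ for $s\geq m+n$. The degree computation of Lemma \ref{m<n-11} — whose reductions only move $I_{k-r},J_{k-s}$ past $H^{\bfh}L^{\bfl}$ and therefore use only \eqref{2m-2} together with the vanishing of $I_s,J_s$ for $s>k$, not that of $L_s,H_s$ — then shows $V_0:=\Ind_{\GG^{(m,0)}}^{\GG_0}W'$ is irreducible over $\GG_0$, and the same computation gives the injectivity of $I_k,J_k$ on $V_0$ (Proposition \ref{m<n-1}(1)).

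The main obstacle is the last induction $\GG_0\to\GG$ through Lemma \ref{CY-Lemma}, whose hypothesis (3) demands that the raising operators annihilate the module above the cutoff. The $I,J$-cutoff is exactly $k=m+n-1$, but the scalars $\psi_{m,n}(L_s)$ $(m+n\leq s\leq 2m)$ and $\psi_{m,n}(H_s)$ $(m+n\leq s\leq 2m-1)$ need not vanish on $V_0$, and the top value $\psi_{m,n}(L_{2m})$ \emph{cannot} be killed by any automorphism preserving $\GG^{(m,n)}$, so one cannot simply normalise $\psi_{m,n}$. When $n>m$ this index range is empty, so Corollary \ref{m<n-12} already yields $L_sV_0=H_sV_0=I_sV_0=J_sV_0=0$ for $s>k$ and Lemma \ref{CY-Lemma} applies directly with $d=0$. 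When $n\leq m$ I would instead use the shift parameter: set $d=m-n+1$, form $V_d:=\Ind_{\GG_0}^{\GG_d}V_0$ (so that $\Ind_{\GG_d}^{\GG}V_d=W_{\psi_{m,n}}$), and exploit the \emph{separate} $L,H$-cutoff $k+d=2m$ in Lemma \ref{CY-Lemma}(3), which absorbs precisely these nonzero scalars. A short commutator check gives $L_sV_d=H_sV_d=0$ for $s>2m$ and $I_sV_d=J_sV_d=0$ for $s>k$, while $I_k,J_k$ stay injective on $V_d$ because they commute with the adjoined abelian operators $I_{-1},\dots,I_{-d},J_{-1},\dots,J_{-d}$. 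The delicate remaining point, which I expect to require the most care, is the irreducibility of $V_d$ over $\GG_d$; this follows from a lowering argument of the same flavour as Lemma \ref{m<n-11}, driven by the injectivity of $I_k$.

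For necessity, suppose $\psi_{m,n}(I_{m+n-1})\psi_{m,n}(J_{m+n-1})=0$. The map $\tau\colon L_s\mapsto L_s,\ H_s\mapsto -H_s,\ I_s\mapsto J_s,\ J_s\mapsto I_s,\ {\bf c_2}\mapsto-{\bf c_2}$ (fixing ${\bf c_1},{\bf c_3}$) is an automorphism of $\GG$ preserving $\GG^{(m,n)}$ and interchanging $I$ and $J$, so after twisting by $\tau$ I may assume $\psi_{m,n}(I_{m+n-1})=0$. Then for $s\geq m$ the brackets $[H_s,I_{n-1}]=I_{s+n-1}$ and $[L_s,I_{n-1}]=(n-1-s)I_{s+n-1}$ act on $W'$ by the scalars $\psi_{m,n}(I_{s+n-1})=0$, and $I_j,J_j$ $(j<n)$ commute with $I_{n-1}$; hence no operator of $\GG^{(m,0)}$ lowers the number of $I_{n-1}$-factors. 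Consequently
\[
W'_{\geq1}:=\mathrm{span}\{\,J^{\bfj}I^{\bfi}w_{\psi_{m,n}}\;:\;i_{n-1}\geq1\,\}
\]
is a nonzero proper $\GG^{(m,0)}$-submodule of $W'$. Since $W_{\psi_{m,n}}=\Ind_{\GG^{(m,0)}}^{\GG}W'$ and $\UU(\GG)$ is free over $\UU(\GG^{(m,0)})$, the induced submodule $\Ind_{\GG^{(m,0)}}^{\GG}W'_{\geq1}$ is a nonzero proper $\GG$-submodule, so $W_{\psi_{m,n}}$ is reducible. Finally, taking $m=n=1$ gives $k=1$ and the condition $\psi_{1,1}(I_1)\psi_{1,1}(J_1)\neq0$, recovering \cite{CYY}.
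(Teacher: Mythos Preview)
Your iterated-induction skeleton matches the paper's, and for $m<n$ your argument coincides with it (Corollary~\ref{m<n-0}, Corollary~\ref{m<n-12}, Lemma~\ref{CY-Lemma} with $d=0$). The divergence is entirely in the case $m\geq n$, and there your proposal has a genuine gap while also misreading what the paper actually does.

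The paper does \emph{not} try to preserve $\GG^{(m,n)}$. It passes to the equivalent module $W_{\psi_{m,n}}^{\xi_x}$ for an inner automorphism $\xi_x=\exp(\mathrm{ad}_x)$ with $x$ a carefully chosen element of the abelian ideal $\widetilde{IJ}$ (a linear combination of $I_{i-1},J_{i-1}$, $-(m-n)\leq i\leq 0$). The coefficients are obtained by inverting an explicit $2(m{-}n{+}1)\times 2(m{-}n{+}1)$ block upper-triangular matrix whose diagonal entries are nonzero multiples of $\alpha_{m+n-1}$ and $\beta_{m+n-1}$; this forces $L_p\circ w=H_p\circ w=0$ for all $p\geq m+n$ in the twisted action, while leaving the $I,J,\mathbf{c}$-actions untouched. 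One may then assume $\psi_{m,n}(L_p)=\psi_{m,n}(H_p)=0$ for $p\geq m+n$, after which Proposition~\ref{m<n-1} applies with $k=m+n-1$ and Lemma~\ref{CY-Lemma} with $d=0$. So your assertion that $\psi_{m,n}(L_{2m})$ ``cannot be killed'' is exactly what the paper refutes, and it is the device that lets them avoid introducing any $d>0$.

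Your alternative---taking $d=m-n+1$ and inducing one step further to $V_d=\Ind_{\GG_0}^{\GG_d}V_0$---founders precisely at the point you flag as ``delicate'': the irreducibility of $V_d$ over $\GG_d$. The analogy with Lemma~\ref{m<n-11} breaks down. In that lemma the lowering operators are $I_{k-r},J_{k-s}$, which act as \emph{scalars} on the base module $V$, so subtracting the scalar isolates the commutator term. Here the new PBW generators are $I_{-1},\dots,I_{-d},J_{-1},\dots,J_{-d}$, and the only elements of $\GG_d$ that fail to commute with them are $L_i,H_i$. To strip an $I_{-j}$ you would need $H_{k+j}$ or $L_{k+j}$ with $k+j\in\{m+n,\dots,2m\}$; but these do \emph{not} act as scalars on $V_0$ (they fail to commute with the $L_0,\dots,L_{m-1}$ in the PBW basis of $V_0$, and $\psi_{m,n}(H_{k+j}),\psi_{m,n}(L_{k+j})$ need not vanish), so there is no ``$(H_{k+j}-\text{const})$'' operator available, and the degree computation of Lemma~\ref{m<n-11} does not transfer. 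The injectivity of $I_k$ alone is not enough to drive a lowering argument through this obstruction. This is exactly the difficulty the paper's automorphism trick is designed to dissolve: once $\psi_{m,n}(L_p)=\psi_{m,n}(H_p)=0$ for $p\geq m+n$, Proposition~\ref{m<n-1} gives $L_sV_0=H_sV_0=0$ for $s>k$ and no further induction is needed.

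For necessity your argument is correct but heavier than required. The paper simply checks that $w_1:=I_{n-1}w_{\psi_{m,n}}$ (respectively $J_{n-1}w_{\psi_{m,n}}$) is again a $\psi_{m,n}$-Whittaker vector when $\psi_{m,n}(I_{m+n-1})=0$ (respectively $\psi_{m,n}(J_{m+n-1})=0$), hence generates a proper submodule by PBW. No automorphism $\tau$ and no $I_{n-1}$-filtration are needed.
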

\begin{proof}
	$(\Leftarrow)$ 
	It is clear that $$W_{\psi_{m,n}}=\Ind_{\GG^{(m, n)}}^{\GG} \C w_{\psi_{m,n}}\cong\Ind_{\GG_0}^{\GG}\Ind_{\GG^{(m, 0)}}^{\GG_0}\Ind_{\GG^{(m, n)}}^{\GG^{(m,0)}} \C w_{\psi_{m,n}}.$$
Then we consider the following two cases.

Case 1: $m<n$.

Then by Corollary \ref{m<n-0} we see that the $\Ind_{\GG^{(m, n)}}^{\GG^{(m,0)}} \C w_{\psi_{m,n}}$ is an irreducible 
$\GG^{(m, 0)}$-module satisfying the conditions of Corollary \ref{m<n-12} (with $k=m+n-1$).
 So $\Ind_{\GG^{(m, 0)}}^{\GG_0}\Ind_{\GG^{(m, n)}}^{\GG^{(m,0)}} \C w_{\psi_{m,n}}$ is an irreducible $\GG_0$-module 
 and  satisfies the conditions of Lemma \ref{CY-Lemma} (with $d=0, k=m+n-1$). Therefore $\Ind_{\GG_0}^{\GG}\Ind_{\GG^{(m, 0)}}^{\GG_0}\Ind_{\GG^{(m, n)}}^{\GG^{(m,0)}} \C w_{\psi_{m,n}}$ is an irreducible $\GG$-module, that is, $W_{\psi_{m,n}}$ is an irreducible $\GG$-module.

Case 2: $m\geq n$.

Then we denote $\al_p=\psi_{m,n}(I_p), \beta_p=\psi_{m,n}(J_p)$ for any $2n-1\leq p\leq m+n-1$. (If $n=0$, we denote $\al_{-1}=\beta_{-1}=0$.) We define the following four matrices:
\[
A=\begin{pmatrix}
(m+n+1)\al_{m+n-1}&(m+n+2)\al_{m+n-2}&\cdots&(2m+1)\al_{2n-1}\\            
0&(m+n+3)\al_{m+n-1}&\cdots&(2m+2)\al_{2n}\\
0&0&\cdots&\cdots\\
0&0&\cdots&(3m-n+1)\al_{m+n-1}
\end{pmatrix}_{(m-n+1)\times (m-n+1)},\\
\]
\[
B=\begin{pmatrix}
(m+n+1)\be_{m+n-1}&(m+n+2)\be_{m+n-2}&\cdots&(2m+1)\be_{2n-1}\\            
0&(m+n+3)\be_{m+n-1}&\cdots&(2m+2)\be_{2n}\\
0&0&\cdots&\cdots\\
0&0&\cdots&(3m-n+1)\be_{m+n-1}
\end{pmatrix}_{(m-n+1)\times (m-n+1)},
\]
\[
C=\begin{pmatrix}
-\al_{m+n-1}&-\al_{m+n-2}&\cdots&-\al_{2n-1}\\            
0&-\al_{m+n-1}&\cdots&-\al_{2n}\\
0&0&\cdots&\cdots\\
0&0&\cdots&-\al_{m+n-1}
\end{pmatrix}_{(m-n+1)\times (m-n+1)},
\]
\[
D=\begin{pmatrix}
\be_{m+n-1}&\be_{m+n-2}&\cdots&\be_{2n-1}\\            
0&\be_{m+n-1}&\cdots&\be_{2n}\\
0&0&\cdots&\cdots\\
0&0&\cdots&\be_{m+n-1}
\end{pmatrix}_{(m-n+1)\times (m-n+1)}.
\]
By the simple observations we see that the matrix 
$\begin{pmatrix}
A&B\\            
C&D
\end{pmatrix}$
is invertible since $\al_{m+n-1}\beta_{m+n-1}\ne 0$.
Set $$X=(\psi_{m,n}(L_{m+n}),\psi_{m,n}(L_{m+n+1}),\cdots,\psi_{m,n}(L_{2m}))^T,\ Y=(\psi_{m,n}(H_{m+n}),\psi_{m,n}(H_{m+n+1}),\cdots,\psi_{m,n}(H_{2m}))^T.$$
Then there exist $a_0,a_{-1},\cdots,a_{-(m-n)},b_0,b_{-1},\cdots, b_{-(m-n)}\in\C$ such that
\begin{align}\label{matrix}
\begin{pmatrix}
X\\            
Y
\end{pmatrix}=\begin{pmatrix}
A&B\\            
C&D
\end{pmatrix}
\begin{pmatrix}
X_1\\            
Y_1
\end{pmatrix},
\end{align}
where $$X_1=(a_0,a_{-1},\cdots,a_{-(m-n)})^T,\ Y=(b_0,b_{-1},\cdots, b_{-(m-n)})^T.$$
Denote $x=\sum_{i=-(m-n)}^0(-a_iI_{i-1}-b_iJ_{i-1})$.
Then $\xi_x=\exp(ad_x): \GG\to \GG$ is a Lie algebra automorphism,
where the action of $ad_x$ on $\GG$ is the adjoint action.
Let $W_{\psi_{m,n}}^{\xi_x}$ be the new Whittaker module with action
$y\circ w_{\psi_{m,n}}=\xi_x(y)w_{\psi_{m,n}}$ for any $y\in\GG$,
which is equivalent to $W_{\psi_{m,n}}$ as $\GG$-modules.
By definition of $\xi_x$ and Eqs.~\eqref{derived-algebra}, \eqref{matrix},
we get 
$$L_p\circ w_{\psi_{m,n}}=H_p\circ w_{\psi_{m,n}}=0, \quad \forall p\geq m+n,$$ and the actions of $\widetilde{IJ}$ and ${\bf c_1, c_2, c_3}$ are unchanged.
Therefore, without loss of generality, we may assume that
\begin{equation}\psi_{m,n}(L_p)=\psi_{m,n}(H_p)=0,~\forall p\geq m+n.\label{twisted}\end{equation}
Then by Proposition \ref{m>n-0} and Eq.~\eqref{twisted} we deduce that the $\Ind_{\GG^{(m, n)}}^{\GG^{(m,0)}} \C w_{\psi_{m,n}}$ is an irreducible 
$\GG^{(m, 0)}$-module and satisfies the conditions of Proposition \ref{m<n-1} (with $k=m+n-1$).
So $\Ind_{\GG^{(m, 0)}}^{\GG_0}\Ind_{\GG^{(m, n)}}^{\GG^{(m,0)}} \C w_{\psi_{m,n}}$ is an irreducible $\GG_0$-module 
and  satisfies the conditions of Lemma \ref{CY-Lemma} (with $d=0, k=m+n-1$). Therefore $\Ind_{\GG_0}^{\GG}\Ind_{\GG^{(m, 0)}}^{\GG_0}\Ind_{\GG^{(m, n)}}^{\GG^{(m,0)}} \C w_{\psi_{m,n}}$ is an irreducible $\GG$-module, that is, $W_{\psi_{m,n}}$ is an irreducible $\GG$-module.
	
	$(\Rightarrow)$
	Assume that $\psi_{m,n}(I_{m+n-1})\psi_{m,n}(J_{m+n-1})=0$.
	Then $\psi_{m,n}(I_{m+n-1})=0$ or $\psi_{m,n}(J_{m+n-1})=0$. 
	If $\psi_{m,n}(I_{m+n-1})=0$, denote $w_1=I_{n-1}w_{\psi_{m,n}}$, then
	$y\cdot w_1=\psi_{m,n}(y)w_1$ for any $y\in\GG^{(m, n)}$. 
	By the PBW Theorem it is clear that $w_1$ generates a nonzero proper submodule of $W_{\psi_{m,n}}$
	which contradicts that $W_{\psi_{m,n}}$ is an irreducible $\GG$-module. 
	Similarly, if $\psi_{m,n}(J_{m+n-1})=0$, 
	then $J_{n-1}w_{\psi_{m,n}}$ generates a nonzero proper submodule of $W_{\psi_{m,n}}$ which also yields a contradiction.
	So $\psi_{m,n}(I_{m+n-1})\psi_{m,n}(J_{m+n-1})\ne 0$.
\end{proof}

In the remaining parts, we give some discussions about the Whittaker module $W_{\psi_{m,n}}$ when $m\in\Z_+,n\in\N$ and $m,n$ have the different parities.

\begin{example}
	Let $m\in\Z_+$. Suppose that $\psi_{m,m+1}:\GG^{(m, m+1)}\rightarrow \C$ is a Whittaker function and $W_{\psi_{m,m+1}}=\Ind_{\GG^{(m, m+1)}}^{\GG} \C w_{\psi_{m,m+1}}$
	is the corresponding Whittaker module over $\GG$. Then $W_{\psi_{m,m+1}}$ is a reducible $\GG$-module.
\end{example}
\begin{proof}
	By Eq.~\eqref{derived-algebra} we see that
	$$\psi_{m,m+1}(L_{2m+1+j})=\psi_{m,m+1}(H_{2m+j})=\psi_{m,m+1}(I_{2m+1+j})=\psi_{m,m+1}(J_{2m+1+j})=0,\,\,\forall j\in\N.$$
	Then we consider the following three cases.
	
	Case 1: $\psi_{m,m+1}(I_{2m})=0$.
	
	Then denote $v=I_{m}w_{\psi_{m,m+1}}$. It is easy to see that 
	$$y\cdot v=\psi_{m,m+1}(y)v, \ \ \ \forall y\in \GG^{(m, m+1)}.$$
	Thus $v$ generates a nonzero proper submodule of $W_{\psi_{m,m+1}}$ by the PBW Theorem.
	So $W_{\psi_{m,m+1}}$ is reducible.
	
	Case 2: $\psi_{m,m+1}(J_{2m})=0$.
	
	Then $J_{m}w_{\psi_{m,m+1}}$ generates a nonzero proper submodule of $W_{\psi_{m,m+1}}$ by the similar discussions to Case 1.
	So $W_{\psi_{m,m+1}}$ is reducible.
	
	Case 3: $\psi_{m,m+1}(I_{2m})\psi_{m,m+1}(J_{2m})\ne 0$.
	
	Then denote $v=I_mw_{\psi_{m,m+1}}+\frac{\psi_{m,m+1}(I_{2m})}{\psi_{m,m+1}(J_{2m})}J_mw_{\psi_{m,m+1}}$, 
	by the direct computations we can obtain that
	$$y\cdot v=\psi_{m,m+1}(y)v, \ \ \ \forall y\in \GG^{(m, m+1)}.$$
	Therefore $v$ generates a nonzero proper submodule of $W_{\psi_{m,m+1}}$ by the PBW Theorem.
	So $W_{\psi_{m,m+1}}$ is reducible.
\end{proof}

\begin{example}
	Let $m\in\Z_+$. Suppose that $\psi_{m,m-1}:\GG^{(m, m-1)}\rightarrow \C$ is a Whittaker function and $W_{\psi_{m,m-1}}=\Ind_{\GG^{(m, m-1)}}^{\GG} \C w_{\psi_{m,m-1}}$
	is the corresponding Whittaker module over $\GG$. Then $W_{\psi_{m,m-1}}$ is a reducible $\GG$-module.
\end{example}
\begin{proof}
By Eq.~\eqref{derived-algebra} we see that
	$$\psi_{m,m-1}(L_{2m+1+j})=\psi_{m,m-1}(H_{2m+j})=\psi_{m,m-1}(I_{2m-1+j})=\psi_{m,m-1}(J_{2m-1+j})=0,\,\,\forall  j\in\N.$$
Then we consider the following three cases.
	
	Case 1: $\psi_{m,m-1}(I_{2m-2})=0$.
	
	Then denote $w=I_{2m-3}w_{\psi_{m,m-1}}$. It is easy to see that 
	$$y\cdot w=\psi_{m,m-1}(y)w, \ \ \ \forall y\in \GG^{(m, m-1)}.$$
	Thus $w$ generates a nonzero proper submodule of $W_{\psi_{m,m-1}}$ by the PBW Theorem.
	So $W_{\psi_{m,m-1}}$ is reducible.
	
	Case 2: $\psi_{m,m-1}(J_{2m-2})=0$.
	
	Then $J_{2m-3}w_{\psi_{m,m-1}}$ generates a nonzero proper submodule of $W_{\psi_{m,m-1}}$ by the similar discussions to Case 1.
	So $W_{\psi_{m,m-1}}$ is reducible.
	
	Case 3: $\psi_{m,m-1}(I_{2m-2})\psi_{m,m-1}(J_{2m-2})\ne 0$.
	
	From the proof of Theorem \ref{main-1} we can assume that $$\psi_{m,m-1}(L_p)=\psi_{m,m-1}(H_p)=0,\ \ \ ~\forall p\geq 2m-1.$$
	Then denote $w=L_{m-1}w_{\psi_{m,m-1}}$, we can deduce that
	$$y\cdot w=\psi_{m,m-1}(y)w, \ \ \ \forall y\in \GG^{(m, m-1)}.$$
	Therefore $w$ generates a nonzero proper submodule of $W_{\psi_{m,m-1}}$ by the PBW Theorem.
	So $W_{\psi_{m,m-1}}$ is reducible.
\end{proof}

\begin{example}
	Let $\psi_{1,4}:\GG^{(1, 4)}\rightarrow \C$ be a Whittaker function and $W_{\psi_{1,4}}=\Ind_{\GG^{(1, 4)}}^{\GG} \C w_{\psi_{1,4}}$
be the corresponding Whittaker module over $\GG$. Then $W_{\psi_{1,4}}$ is a reducible $\GG$-module.	
\end{example}
\begin{proof}
	By Eq.~\eqref{derived-algebra} we see that
	$$\psi_{1,4}(L_{3+j})=\psi_{1,4}(H_{2+j})=\psi_{1,4}(I_{5+j})=\psi_{1,4}(J_{5+j})=0,\,\,\forall j\in\N.$$
	Then we consider the following three cases.
	
	Case 1: $\psi_{1,4}(I_4)=0$.
	
	Then denote $w=I_3w_{\psi_{1,4}}$. It is clear that 
	$$y\cdot w=\psi_{1,4}(y)w, \ \ \ \forall y\in \GG^{(1, 4)}.$$
	Thus $w$ generates a nonzero proper submodule of $W_{\psi_{1,4}}$ by the PBW Theorem.
	So $W_{\psi_{1,4}}$ is reducible.
	
	Case 2: $\psi_{1,4}(J_4)=0$.
	
	Then $J_3w_{\psi_{1,4}}$ generates a nonzero proper submodule of $W_{\psi_{1,4}}$ by the similar discussions to Case 1.
	So $W_{\psi_{1,4}}$ is reducible.
	
	Case 3: $\psi_{1,4}(I_4)\psi_{1,4}(J_4)\ne 0$.
	
	Then denote $\al=\psi_{1,4}(I_4),  \be=\psi_{1,4}(J_4)$ and $$w=(a_1I_2+a_2J_2+a_3I_3^2+a_4J_3^2+a_5J_3I_3)w_{\psi_{1,4}},$$
	where $a_1, a_2, a_3, a_4, a_5\in\C$.
	It is easy to see that 
	$$(L_{2+j}-\psi_{1,4}(L_{2+j}))w=(H_{3+j}-\psi_{1,4}(H_{3+j}))w=(I_{4+j}-\psi_{1,4}(I_{4+j}))w=(J_{4+j}-\psi_{1,4}(J_{4+j}))w=0,$$
	for all $j\in\N$.
    Furthermore, by the direct computations we obtain
	\begin{align*}
	(H_2-\psi_{1,4}(H_2))w&=(\al a_1-\be a_2)w, \\
	(H_1-\psi_{1,4}(H_1))w&=(a_1I_3-a_2J_3+2\al a_3I_3-2\be a_4J_3-\be a_5 I_3+\al a_5 J_3)w \\
	&=(a_1+2\al a_3-\be a_5)I_3w+(-a_2-2\be a_4+\al a_5)J_3w,  \\
	(L_1-\psi_{1,4}(L_1))w&=(a_1I_3+a_2J_3+4\al a_3I_3+4\be a_4J_3+2\be a_5 I_3+2\al a_5 J_3)w  \\
	&=(a_1+4\al a_3+2\be a_5)I_3w+(a_2+4\be a_4+2\al a_5)J_3w.
    \end{align*}
Note that the following matrix 
\[
\begin{pmatrix}
\al&-\be&0&0&0\\            
1&0&2\al&0&-\be\\
1&0&4\al&0&2\be\\
0&-1&0&-2\be&\al  \\
0&1&0&4\be&2\al
\end{pmatrix}
\]
is not invertible.	
So there exist $a_1,a_2, a_3, a_4, a_5\in\C$ (which are not all zero) such that 
	\[
	\begin{pmatrix}
	\al&-\be&0&0&0\\            
	1&0&2\al&0&-\be\\
	1&0&4\al&0&2\be\\
	0&-1&0&-2\be&\al  \\
	0&1&0&4\be&2\al
	\end{pmatrix}
	\begin{pmatrix}
	a_1\\            
	a_2\\
	a_3\\
	a_4  \\
	a_5
	\end{pmatrix}=
	\begin{pmatrix}
	0\\            
	0\\
	0\\
	0  \\
	0
	\end{pmatrix}.
	\]
Thus there exists nonzero element $$w=(a_1I_2+a_2J_2+a_3I_3^2+a_4J_3^2+a_5J_3I_3)w_{\psi_{1,4}}$$
 such that 	
 $$(L_{1+j}-\psi_{1,4}(L_{1+j}))w=(H_{1+j}-\psi_{1,4}(H_{1+j}))w=(I_{4+j}-\psi_{1,4}(I_{4+j}))w=(J_{4+j}-\psi_{1,4}(J_{4+j}))w=0,$$
 for all $j\in\N$, which imply that
	$w$ generates a nonzero proper submodule of $W_{\psi_{1,4}}$ by the PBW Theorem.
	So $W_{\psi_{1,4}}$ is reducible.
\end{proof}

From the above examples we give the following conjecture.

{\bf Conjecture:} Let $m\in\Z_+, n\in\N$ and $m,n$ have the different parities.
Suppose that $\psi_{m,n}:\GG^{(m, n)}\rightarrow \C$ is a Whittaker function. 
Then the corresponding Whittaker $\GG$-module $W_{\psi_{m,n}}=\Ind_{\GG^{(m,n)}}^{\GG} \C w_{\psi_{m,n}}$ is always reducible.

\section{Irreducible $\GG$-modules from tensor products}

In this section, we study the tensor products of $\UU(\hh)$-free modules of rank one and Whittaker modules over $\GG$ since they are the most popular two families of non-weight modules.
Note that restricted modules include Whittaker modules, so we consider directly the tensor products of $\UU(\hh)$-free modules of rank one and restricted modules over $\GG$.
 At the end of subsection \ref{universal central extension} we see that there are three families of $\UU(\hh)$-free modules of rank one over $\GG$, denoted by  $\Omega(\lambda,\eta,\sigma,0),
\Omega(\lambda,\eta,0,\sigma)$ and $\Omega(\lambda,\delta,0,0)$ respectively. 
Let $R$ be an irreducible restricted module over $\GG$. 
We mainly investigate the irreducibility of $\Omega(\lambda,\eta,\sigma,0)\otimes R$ and $\Omega (\lambda,\eta,0,\sigma)\otimes R$
since $\Omega(\lambda,\delta,0,0)$ is always reducible.
More precisely, we give the irreducible criteria for the tensor products $\Omega(\lambda,\eta,\sigma,0)\otimes R$ and $\Omega (\lambda,\eta,0,\sigma)\otimes R$ viewed as $\GG$-modules.
Moreover, the necessary and sufficient conditions for these tensor product modules to be isomorphic are determined.

\subsection{Irreducibility}

In this subsection, we determine the irreducibility of the tensor product modules.
\begin{theorem}\label{tensor product modules}
 Suppose $\lambda\in\C^*,\eta\in\C,\sigma\in\C[X]$ with $\sigma\ne 0$. Let $R$ be an irreducible restricted module over $\GG$. The following conclusions hold.
 \begin{enumerate}[$(1)$]
  \item $\Omega(\lambda,\eta,\sigma,0)\otimes R$ is an irreducible $\GG$-module  if and only if $\sigma$ is a nonzero constant.
 \item $\Omega(\lambda,\eta,0,\sigma)\otimes R$ is an irreducible $\GG$-module  if and only if $\sigma$ is a nonzero constant.
\end{enumerate}
\end{theorem}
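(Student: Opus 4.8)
The plan is to treat the two directions separately, with the ``only if'' direction being essentially immediate and the ``if'' direction carrying all the work. I focus on part $(1)$, since part $(2)$ is obtained by interchanging the roles of $I_m$ and $J_m$ (and the sign of $mX$) verbatim.

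For necessity, suppose $\sigma$ is not a nonzero constant. Then by Lemma~\ref{U(h)-irr}$(1)$ the module $\Omega(\lambda,\eta,\sigma,0)$ is reducible, so it admits a proper nonzero $\GG$-submodule $\Omega'$. I would then observe that $\Omega'\otimes R$ is a $\GG$-submodule of $\Omega(\lambda,\eta,\sigma,0)\otimes R$: for $x\in\GG$ we have $x(f'\otimes r)=(xf')\otimes r+f'\otimes(xr)$, which lies in $\Omega'\otimes R$ because $xf'\in\Omega'$. Since $\Omega'\subsetneq\Omega(\lambda,\eta,\sigma,0)$ and $R\ne0$, this submodule is proper and nonzero, so the tensor product is reducible. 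Thus irreducibility forces $\sigma$ to be a nonzero constant, which we assume from now on (and may take $\sigma=c\in\C^*$).

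For sufficiency, the crucial point is that $R$ is restricted: for any fixed element, every $I_m,H_m,L_m$ with $m$ large enough annihilates its $R$-components, so on such an element these operators act only on the $\Omega$-factor, where (after dividing by $\lambda^m$) $H_m$ sends $f\mapsto Xf(X,Y-m)$, $I_m$ sends $f\mapsto c\,f(X-1,Y-m)$, and $L_m$ sends $f\mapsto f(X,Y-m)(Y-mX+m\eta)$. Let $W$ be a nonzero submodule and pick $0\ne w=\sum_{i=1}^{k}f_i\otimes r_i\in W$ with $\{r_i\}$ linearly independent. Because each displayed action is polynomial in $m$ for large $m$, I would extract coefficients by finite differences in $m$ (Vandermonde/Lagrange interpolation). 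Applying this to $H_m$ isolates the top $Y$-degree part and produces a nonzero element of $W$ whose $\Omega$-components are polynomials in $X$ alone; applying it to iterates of $I_m$ (which, since $c$ is constant, act as a clean shift $X\mapsto X-1$) realizes the $X$-finite-difference operator $\Delta^{D}$, killing the $X$-degree and yielding a nonzero element $1\otimes r\in W$. Here the hypothesis that $\sigma$ is constant is decisive: a nonconstant $\sigma$ would force every $I_m$-image to be divisible by $\sigma(X)$, obstructing the degree reduction — which is exactly the source of reducibility in the necessity direction.

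Given $1\otimes r\in W$ with $r\ne0$, I would next show $\C[X,Y]\otimes r\subseteq W$. For $m$ large, $\lambda^{-m}H_m(g\otimes r)=Xg(X,Y-m)\otimes r$ is a polynomial in $m$, so its value at $m=0$, namely $Xg\otimes r$, lies in $W$ by interpolating the values at several large $m$ (note one never applies $H_0$ directly); similarly $\lambda^{-m}L_m$ yields $Yg\otimes r\in W$. An induction on degree then gives $\C[X,Y]\otimes r\subseteq W$ whenever $1\otimes r\in W$. Finally I would consider $R_0=\{r\in R:1\otimes r\in W\}$, which is nonzero. For $x\in\GG$ and $r\in R_0$ we have $x(1\otimes r)=(x\cdot1)\otimes r+1\otimes(xr)\in W$, and $(x\cdot1)\otimes r\in\C[X,Y]\otimes r\subseteq W$, so $1\otimes(xr)\in W$, i.e. $xr\in R_0$; thus $R_0$ is a $\GG$-submodule of $R$. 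Since $R$ is irreducible, $R_0=R$, whence $\C[X,Y]\otimes R\subseteq W$ and $W=\Omega(\lambda,\eta,\sigma,0)\otimes R$. The main obstacle I expect is precisely the reduction-to-$1\otimes r$ step: one must organize the finite-difference arguments in $m$ so that the $Y$-shifts introduced by every operator are systematically removed while guaranteeing the extracted element stays nonzero — this is where the linear independence of the $r_i$ and the constancy of $\sigma$ are genuinely used. The remaining steps are routine, and part $(2)$ follows by the symmetric argument with $J_m$ in place of $I_m$.
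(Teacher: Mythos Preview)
Your proposal is correct and follows essentially the same strategy as the paper's proof: both use $H_m$ for large $m$ together with a Vandermonde argument to strip off the $Y$-dependence, then $I_m$ for large $m$ (acting as the shift $X\mapsto X-1$ since $\sigma\in\C^*$) to reduce the $X$-degree and reach an element $1\otimes r$, then build up $\C[X,Y]\otimes r$ inside the submodule via $L_m,H_m$ for large $m$, and finally pass to a $\GG$-submodule of $R$. The only cosmetic differences are that the paper phrases the $X$-reduction as a minimality/contradiction argument rather than iterated finite differences, obtains $X^iY^j\otimes w$ by comparing two distinct large indices $m\ne m'$ rather than interpolating to $m=0$, and works with the set $\{w'\in R:\Omega\otimes w'\subseteq V\}$ instead of your $R_0=\{r:1\otimes r\in W\}$ (which coincide once $\C[X,Y]\otimes r\subseteq W$ is known).
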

\begin{proof}
$(1)$
$(\Rightarrow)$. By Lemma \ref{U(h)-irr}, it is clear.

$(\Leftarrow)$.
Suppose that $\sigma$ is a nonzero constant. Let $V$ be a nonzero submodule of $\Omega(\lambda,\eta,\sigma,0)\otimes R$. Then there exists nonzero $$v=\sum_{j=0}^q\sum_{i=0}^p(X^iY^j\otimes v_{ij})\in V$$ such that $v_{iq}\ne 0$ for some $0\leq i\leq p$,
where $p,q\in\N, v_{ij}\in R$. 

Note that for any $0\leq i\leq p, 0\leq j\leq q$, there exists $N_{ij}\in\Z_+$ such that $$L_mv_{ij}=H_mv_{ij}=I_mv_{ij}=J_mv_{ij}=0,\quad \forall m\geq N_{ij},$$
by definition of restricted modules.
Denote $N_v=\max\{N_{ij}~|~0\leq i\leq p, 0\leq j\leq q\}$. 
Thus for any $m\geq N_v$,
\begin{align}\label{eq-Hm}
\lambda^{-m}H_mv=\sum_{j=0}^q\sum_{i=0}^p\big(X^{i+1}(Y-m)^j\otimes v_{ij}\big)\in V.
\end{align}
The right hand side of Eq.~\eqref{eq-Hm} can be written in the form $\sum_{j=0}^qm^jv_j$,
where $v_j\in \Omega(\lambda,\eta,\sigma,0)\otimes R$ are independent of $m$.
Taking $m=N_v, N_v+1,\cdots,N_v+q$, it is clear that the coefficient matrix of $v_j$, where $0\leq j\leq q$, is a Vandermonde matrix.
So $$v_j\in V,\quad \forall\ 0\leq j\leq q.$$ 
In particular, $v_q=(-1)^q\sum_{i=0}^p\big(X^{i+1}\otimes v_{iq}\big)\in V$.
Hence there exists nonzero $$v'=\sum_{i=0}^tX^i\otimes v_i\in V,$$
where $t\in\N, v_i\in R$ for $0\leq i\leq t$ with $v_t\ne 0$.
Choose the such nonzero $v'=\sum_{i=0}^tX^i\otimes v_i\in V$ such that $t$ is minimal.
\begin{claim}\label{cla.1}
$t=0$.
\end{claim}
Assume $t>0$. For any $0\leq i\leq t$, there exists $N_i\in\Z_+$ such that $$L_mv_i=H_mv_i=I_mv_i=J_mv_{ij}=0, \quad \forall m\geq N_i.$$
Let $N_{v'}=\max\{N_i~|~0\leq i\leq t\}$. Taking $m\geq N_{v'}$,  we deduce
$$v'-\lambda^{-m}\sigma^{-1}I_mv'=\sum_{i=0}^t\big{(}X^i-(X-1)^i\big{)}\otimes v_i=tX^{t-1}\otimes v_t+\sum_{i=0}^{t-2}X^i\otimes v_i'\in V,$$
where $v_i'\in R$ for $0\leq i\leq t-2$, which contradicts that $t$ is minimal. So $t=0$. Claim \ref{cla.1} is proved.

From Claim \ref{cla.1} we see that there exists nonzero $w\in R$ such that $1\otimes w\in V$.
Furthermore, there exists $N_w\in\Z_+$ such that $$L_mw=H_mw=I_mw=J_mw=0,\quad \forall m\geq N_w.$$
\begin{claim}\label{cla.2}
$X^iY^j\otimes w\in V,\ \forall i,j\in\N$.
\end{claim}
We prove the claim by induction on $i+j$. First, $X^iY^j\otimes w\in V$ is trivial if $i+j=0$.
Suppose the results hold for $i+j\leq k$, where $k\in\N$. For $i+j=k+1$, it is easy to see that $i\geq 1$ or $j\geq 1$.
If $i\geq 1$, then $X^{i-1}Y^j\otimes w\in V$. For arbitrary $m, m'>N_w$ with $m\ne m'$, we compute
$$\lambda^{-m}L_m(X^{i-1}Y^j\otimes w)=X^{i-1}(Y-m)^j(Y-mX+m\eta)\otimes w\in V,$$
which implies
\begin{equation}\label{m}
X^{i-1}Y^j(Y-mX)\otimes w\in V,
\end{equation}
by inductive hypothesis.
Similarly, we can get
\begin{equation}\label{m'}
X^{i-1}Y^j(Y-m'X)\otimes w\in V.
\end{equation}
From Eqs. \eqref{m} and \eqref{m'}, we can obtain $X^iY^j\otimes w\in V$.
If $j\geq 1$, we also can get $X^iY^j\otimes w\in V$ by similar discussions. Claim \ref{cla.2} is proved.

From Claim \ref{cla.2} we see $\Omega(\lambda,\eta,\sigma,0)\otimes w\subseteq V$.
Denote $$W=\{w'\in R~|~\Omega(\lambda,\eta,\sigma,0)\otimes w'\subseteq V\}.$$  It is clear that $W$ is nonzero since $w\in W$.
For any $w'\in W$, we deduce
\begin{align*}
&\Omega(\lambda,\eta,\sigma,0)\otimes L_m(w')\subseteq L_m(\Omega(\lambda,\eta,\sigma,0)\otimes w')-L_m\Omega(\lambda,\eta,\sigma,0)\otimes w'\subseteq V,\\
&\Omega(\lambda,\eta,\sigma,0)\otimes H_m(w')\subseteq H_m(\Omega(\lambda,\eta,\sigma,0)\otimes w')-H_m\Omega(\lambda,\eta,\sigma,0)\otimes w'\subseteq V,\\
&\Omega(\lambda,\eta,\sigma,0)\otimes I_m(w')\subseteq I_m(\Omega(\lambda,\eta,\sigma,0)\otimes w')-I_m\Omega(\lambda,\eta,\sigma,0)\otimes w\subseteq V, \\
&\Omega(\lambda,\eta,\sigma,0)\otimes J_m(w')=J_m(\Omega(\lambda,\eta,\sigma,0)\otimes w')\subseteq V, \quad \forall m\in\Z,\\
&\Omega(\lambda,\eta,\sigma,0)\otimes {\bf c_i}(w')={\bf c_i}(\Omega(\lambda,\eta,\sigma,0)\otimes w')\subseteq V,\quad \forall i=1,2,3.
\end{align*}
Hence $$L_m(w')\in W,\ H_m(w')\in W, \ I_m(w')\in W,\ J_m(w')\in W, \ \forall m\in\Z, \quad {\bf c_i}(w')\in W,\  \forall i=1,2,3.$$
which show that $W$ is a nonzero submodule of $R$.  Thus $W=R$ since $R$ is irreducible. So $V=\Omega(\lambda,\eta,\sigma,0)\otimes R$.
This completes the proof of $(1)$.

$(2)$ It is similar to $(1)$.
\end{proof}

By Theorems \ref{main-1} and \ref{tensor product modules} the following corollary is trivial.
\begin{corollary}
Let $\lambda,\sigma\in\C^*,\eta\in\C, m\in\Z_+, n\in\N$ and $m,n$ have the same parity.
Suppose that $\psi_{m,n}:\GG^{(m, n)}\rightarrow \C$ is a Whittaker function
and $W_{\psi_{m,n}}=\Ind_{\GG^{(m, n)}}^{\GG} \C w_{\psi_{m,n}}$ is the corresponding Whittaker module over $\GG$.
Then the following results hold.
	\begin{enumerate}[$(1)$]
		\item $\Omega(\lambda,\eta,\sigma,0)\otimes W_{\psi_{m,n}}$ is an irreducible $\GG$-module if and only if $\psi_{m,n}(I_{m+n-1})\psi_{m,n}(J_{m+n-1})\ne 0$.
		\item $\Omega(\lambda,\eta,0,\sigma)\otimes W_{\psi_{m,n}}$ is an irreducible $\GG$-module if and only if $\psi_{m,n}(I_{m+n-1})\psi_{m,n}(J_{m+n-1})\ne 0$.
	\end{enumerate}
\end{corollary}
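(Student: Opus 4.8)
The plan is to obtain the Corollary as a direct specialization of Theorem \ref{tensor product modules}, feeding in the irreducibility criterion for Whittaker modules from Theorem \ref{main-1}. Two preliminary observations make this possible: a Whittaker module over $\GG$ is automatically restricted (as noted in the remark following the definition of restricted modules), so that $R=W_{\psi_{m,n}}$ is a legitimate choice of second tensor factor; and since $\sigma\in\C^*$, the hypothesis ``$\sigma$ is a nonzero constant'' of Theorem \ref{tensor product modules} holds as stated. I would treat $(1)$ and $(2)$ in parallel, writing $\Omega$ for $\Omega(\lambda,\eta,\sigma,0)$ in $(1)$ and for $\Omega(\lambda,\eta,0,\sigma)$ in $(2)$.

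For the implication from the nonvanishing condition to irreducibility, I would first invoke Theorem \ref{main-1}: under the standing assumptions that $m\in\Z_+$ and $n\in\N$ have the same parity, the hypothesis $\psi_{m,n}(I_{m+n-1})\psi_{m,n}(J_{m+n-1})\ne 0$ guarantees that $W_{\psi_{m,n}}$ is irreducible. It is then an irreducible restricted $\GG$-module, so Theorem \ref{tensor product modules} applies verbatim and gives the irreducibility of $\Omega\otimes W_{\psi_{m,n}}$.

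For the converse I would argue by contraposition. If $\psi_{m,n}(I_{m+n-1})\psi_{m,n}(J_{m+n-1})=0$, then Theorem \ref{main-1} makes $W_{\psi_{m,n}}$ reducible, so it has a nonzero proper $\GG$-submodule $R'$. The point is that $\Omega\otimes R'$ is then a nonzero proper $\GG$-submodule of $\Omega\otimes W_{\psi_{m,n}}$: it is a submodule because the action $x(u\otimes v)=xu\otimes v+u\otimes xv$ keeps both summands inside $\Omega\otimes R'$ once $v\in R'$ and $R'$ is a submodule, and it is nonzero and proper because $0\ne R'\subsetneq W_{\psi_{m,n}}$ while $\Omega\ne 0$. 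Hence $\Omega\otimes W_{\psi_{m,n}}$ is reducible.

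There is no genuine difficulty here; the single point deserving care is that Theorem \ref{tensor product modules} is stated only for an irreducible restricted factor and therefore cannot be applied directly in the converse direction. That is precisely why the converse is routed through the elementary submodule construction $\Omega\otimes R'$ rather than through Theorem \ref{tensor product modules} itself.
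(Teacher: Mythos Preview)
Your proposal is correct and follows the paper's approach: the paper simply states that the corollary is trivial from Theorems \ref{main-1} and \ref{tensor product modules}, and you have spelled this out. Your observation that Theorem \ref{tensor product modules} presupposes $R$ irreducible, so the converse direction must instead go through the elementary submodule $\Omega\otimes R'$, is a valid point that the paper leaves implicit.
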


\subsection{Isomorphism}

In this subsection, the isomorphism classes of these tensor product modules are obtained.
\begin{theorem}\label{iso}
 Let $\lambda,\lambda_1,\sigma,\sigma_1\in\C^*,\eta,\eta_1\in\C$.  Suppose that $R$ and $S$ are irreducible restricted modules over $\GG$. Then the following statements hold.
 \begin{enumerate}[$(1)$]
  \item $\Omega(\lambda,\eta,\sigma,0)\otimes R\cong\Omega(\lambda_1,\eta_1,\sigma_1,0)\otimes S$
  if and only if $\lambda=\lambda_1, \eta=\eta_1, \sigma=\sigma_1$ and $R\cong S$.
 \item $\Omega(\lambda,\eta,0,\sigma)\otimes R\cong\Omega(\lambda_1,\eta_1,0,\sigma_1)\otimes S$
  if and only if $\lambda=\lambda_1, \eta=\eta_1, \sigma=\sigma_1$ and $R\cong S$.
  \item $\Omega(\lambda,\eta,\sigma,0)\otimes R$ and $\Omega(\lambda_1,\eta_1,0,\sigma_1)\otimes S$ 
   are not isomorphic.
\end{enumerate}
\end{theorem}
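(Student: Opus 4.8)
The plan is to treat the three parts separately. In every case the ``if'' direction is immediate: if the parameters agree and $\theta\colon R\to S$ is a $\GG$-isomorphism, then $f\otimes r\mapsto f\otimes\theta(r)$ is an isomorphism of the tensor products. So the whole content lies in the ``only if'' directions of (1) and (2) and in the non-isomorphism (3). Throughout I would exploit the decisive structural feature of these modules: since $R$ and $S$ are restricted, for each fixed vector $v$ the operators $H_m,L_m,I_m,J_m$ act through the $\UU(\hh)$-free factor \emph{alone} once $m$ is large enough. The parameters $\lambda,\eta,\sigma$ govern exactly this factor, so they should be recoverable from the large-$m$ asymptotics of the action, while the restricted factor should survive as a residual piece to be identified.

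For the ``only if'' part of (1), suppose $\phi\colon\Omega(\lambda,\eta,\sigma,0)\otimes R\to\Omega(\lambda_1,\eta_1,\sigma_1,0)\otimes S$ is an isomorphism. First I would detect $\lambda$ and $\sigma$ from the family $\{I_m\}_{m\gg 0}$: on the source $I_m$ acts, for large $m$, as $\lambda^m\sigma$ times the fixed shift $f(X,Y)\mapsto f(X-1,Y-m)$, and similarly with $(\lambda_1,\sigma_1)$ on the target. Filtering each module by the $Y$-degree of the $\C[X,Y]$-factor (a filtration preserved by $H_m,I_m,J_m$) and passing to the associated graded, the $Y$-shift becomes the identity, so on the graded level $I_m$ equals $\lambda^m\sigma$ times a fixed $m$-independent invertible operator (the $X$-shift); comparing exponential growth rates forces $\lambda=\lambda_1$, after which comparing the coefficients forces $\sigma=\sigma_1$. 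With $\lambda$ known, I would read off $\eta$ from $L_m$: on the source $L_m$ contributes the term $m\lambda^m(\eta-X)f(X,Y-m)\otimes r$, whose analogue on the target carries $\eta_1-X$, so matching the $O(m\lambda^m)$ part gives $\eta=\eta_1$.

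Having matched all three scalars, I would recover $R\cong S$ by analysing $r\mapsto\phi(1\otimes r)$: using the now-identical large-$m$ actions of $H_m$ and $I_m$ on both sides to pin down the $\C[X,Y]$-support of $\phi(1\otimes r)$, one extracts a nonzero $\GG$-homomorphism $R\to S$, necessarily an isomorphism since $R$ and $S$ are irreducible. Part (2) then follows by the same argument with the roles of $I_m$ and $J_m$ interchanged, detecting $(\lambda,\sigma)$ from $\{J_m\}_{m\gg 0}$.

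For (3) I would instead use a qualitative invariant distinguishing the two families. Consider the property
\[
(\star)\qquad\text{for every }v\text{ there is }N\text{ with }I_m v=0\text{ for all }m\ge N.
\]
On $\Omega(\lambda_1,\eta_1,0,\sigma_1)\otimes S$ the operator $I_m$ acts only through the restricted factor $S$, so $(\star)$ holds; on $\Omega(\lambda,\eta,\sigma,0)\otimes R$ it acts for large $m$ as $\lambda^m\sigma\,f(X-1,Y-m)\otimes r$, which is nonzero for all $m\gg 0$ whenever $v\ne 0$, so $(\star)$ fails. Since $(\star)$ is preserved by any $\GG$-isomorphism, the two modules cannot be isomorphic. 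The main obstacle is the $\lambda$-detection step in (1)--(2): because $\phi$ need not respect the $Y$-degree filtration, making the ``compare growth rates on the associated graded'' argument rigorous requires controlling the interaction of $\phi$ with that filtration, which is where the careful asymptotic bookkeeping concentrates.
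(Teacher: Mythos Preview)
Your treatment of (3) is correct and matches the paper's: you phrase it as the invariant $(\star)$, while the paper derives a direct contradiction by applying $J_m$ (for $m$ large) to a preimage of $1\otimes w'$; the content is identical.

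For (1)--(2), your instinct that the parameters are encoded in the large-$m$ asymptotics is right, but the ``growth rates on the associated graded'' route has the gap you yourself flag: since $\phi$ need not respect the $Y$-degree filtration, passing to the associated graded is not a priori meaningful, and you give no mechanism to repair this. The paper sidesteps the filtration issue entirely by working at the level of a single well-chosen element. It takes $1\otimes w$ in the source (with $w\ne 0$) and exploits that, for $m$ large, $\lambda^{-m}H_m(1\otimes w)=X\otimes w$ is \emph{independent of $m$}; hence $(\lambda^{-m}H_m-\lambda^{-n}H_n)\,\varphi(1\otimes w)=0$ for all large $m\ne n$, and expanding $\varphi(1\otimes w)=\sum_{i,j} X^iY^j\otimes w_{ij}$ on the target forces first $\lambda=\lambda_1$ (from the leading $X^{p'+1}Y^q$ term) and then the vanishing of all $Y$-dependence. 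With $\varphi(1\otimes w)=\sum_i X^i\otimes w_i$ in hand, the identity $\varphi(I_m(1\otimes w))=I_m\varphi(1\otimes w)$ yields $\sigma=\sigma_1$ and then forces $p=0$; finally comparing $L_m$ and $L_n$ on $1\otimes w$ gives $\eta=\eta_1$. This produces $\varphi(1\otimes w)=1\otimes f(w)$ for a well-defined map $f\colon R\to S$, which is then checked directly to be a $\GG$-homomorphism and hence an isomorphism.

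So the paper's argument is the concrete, element-level realisation of your asymptotic heuristic; the key move you are missing is to anchor everything on the specific vector $1\otimes w$, where the $\UU(\hh)$-free action is as simple as possible, rather than trying to compare filtrations globally.
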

\begin{proof}
(1) The ``if part'' is trivial. We only need to show the ``only if'' part.
Let $$\varphi:\Omega(\lambda,\eta,\sigma,0)\otimes R\rightarrow\Omega(\lambda_1,\eta_1,\sigma_1,0)\otimes S$$ be a $\GG$-module isomorphism.
By definition of restricted modules, for any $w\in R$, there exists $N_w\in\Z_+$ such that
$$L_mw=H_mw=I_mw=J_mw=0,\ \  \forall m\geq N_w.$$
Let $w\in R$ with $w\ne 0$. We can write
$$\varphi(1\otimes w)=\sum_{j=0}^q\sum_{i=0}^p(X^iY^j\otimes w_{ij}),$$
where $p,q\in\N, w_{ij}\in S$ with $w_{iq}\ne 0$ for some $0\leq i\leq p$.
Denote $p'=\max\{i'~|~0\leq i'\leq p, w_{i'q}\ne 0\}$.
\begin{claim}\label{iso1}
$\lambda=\lambda_1$.
\end{claim}
Denote $N'_w=\max\{N_w, N_{w_{ij}}~|~0\leq i\leq p, 0\leq j\leq q\}$.
For any $m,n\geq N'_w$, we compute
\begin{align*}
&\lambda^{-m}H_m\big(\sum_{j=0}^q\sum_{i=0}^p(X^iY^j\otimes w_{ij})\big)=(\frac{\lambda_1}{\lambda})^m\sum_{j=0}^q\sum_{i=0}^p\big(X^{i+1}(Y-m)^j\otimes w_{ij}\big),\\
&\lambda^{-n}H_n\big(\sum_{j=0}^q\sum_{i=0}^p(X^iY^j\otimes w_{ij})\big)=(\frac{\lambda_1}{\lambda})^n\sum_{j=0}^q\sum_{i=0}^p\big(X^{i+1}(Y-n)^j\otimes w_{ij}\big).
\end{align*}
So
\begin{align}\label{q}
0=&\varphi\bigg((\lambda^{-m}H_m-\lambda^{-n}H_n)(1\otimes w)\bigg)
=(\lambda^{-m}H_m-\lambda^{-n}H_n)\varphi(1\otimes w)\notag\\
=&(\lambda^{-m}H_m-\lambda^{-n}H_n)(\sum_{j=0}^q\sum_{i=0}^p(X^iY^j\otimes w_{ij}))\\
=&\sum_{j=0}^q\sum_{i=0}^pX^{i+1}\bigg((\frac{\lambda_1}{\lambda})^m(Y-m)^j-(\frac{\lambda_1}{\lambda})^n(Y-n)^j\bigg)\otimes w_{ij},\notag
\end{align}
which implies $\big((\frac{\lambda_1}{\lambda})^m-(\frac{\lambda_1}{\lambda})^n\big)X^{p'+1}Y^q\otimes w_{p'q}=0$. So $\lambda=\lambda_1$. Claim \ref{iso1} is proved.
\begin{claim}\label{iso2}
	$q=0$.
\end{claim}
Assume that $q>0$. By Claim \ref{iso1} we can take $\lambda=\lambda_1$ in equation \eqref{q}, thus we get 
$$0=\sum_{j=0}^q\sum_{i=0}^pX^{i+1}\bigg((Y-m)^j-(Y-n)^j\bigg)\otimes w_{ij},$$
which implies that $q(n-m)X^{p'+1}Y^{q-1}\otimes w_{p'q}=0$. But it is impossible when $m\ne n$. Contradiction.
So $q=0$. Claim \ref{iso2} is proved.

\begin{claim}\label{iso3}
 $\sigma=\sigma_1$. 
\end{claim}
From Claim \ref{iso2}, we can write $$\varphi(1\otimes w)=\sum_{i=0}^pX^i\otimes w_i,$$ where $p\in\N, w_i\in S$ with $w_p\ne 0$.
Denote $N''_w=\max\{N_w, N_{w_i}~|~0\leq i\leq p\}$. For any $m, n\geq N''_w$, we compute
\begin{align}\label{p}
\lambda^m\sigma\sum_{i=0}^pX^i\otimes w_i=\lambda^m\sigma\varphi(1\otimes w)=&\varphi(\lambda^m\sigma\otimes w)=\varphi(I_m(1\otimes w))\\
=&I_m\varphi(1\otimes w)=I_m(\sum_{i=0}^pX^i\otimes w_i)=\sum_{i=0}^p\lambda^m\sigma_1(X-1)^i\otimes w_i,\notag
\end{align}
since $\lambda=\lambda_1$, which yields
\begin{align*}
\lambda^m\sigma X^p\otimes w_p=\lambda^m\sigma_1X^p\otimes w_p.
\end{align*}
So $\sigma=\sigma_1$.
Claim \ref{iso3} is proved.

\begin{claim}\label{iso4}
	$\eta=\eta_1$. 
\end{claim}
By Claim \ref{iso3} we can take $\sigma=\sigma_1$ in Eq. \eqref{p}, then we obtain
$$\sum_{i=0}^p\big(X^i-(X-1)^i\big)\otimes w_i=0,$$ which indicates $p=0$.
Thus we deduce that for any $w\in R$, there exists $w'\in S$ such that
\begin{align}\label{1Ov}
\varphi(1\otimes w)=1\otimes w'.
\end{align}
Denote $N'''_w=\max\{N_w, N_{w'}\}$.
For any $m,n\geq N'''_w$, we have
\begin{align*}
X\otimes w'=\lambda^{-m}H_m(1\otimes w')&=\varphi(\lambda^{-m}H_m(1\otimes w))=\varphi(X\otimes w),\\
n(Y-mX+m\eta_1)\otimes w'&=n\lambda^{-m}L_m(1\otimes w') \\
&=\varphi(n\lambda^{-m}L_m(1\otimes w))=n(\varphi(Y\otimes w)-m\varphi(X\otimes w)+m\eta \varphi(1\otimes w)),\\
m(Y-nX+n\eta_1)\otimes w'&=m\lambda^{-n}L_n(1\otimes w') \\
&=\varphi(m\lambda^{-n}L_n(1\otimes w))=m(\varphi(Y\otimes w)-n\varphi(X\otimes w)+n\eta \varphi(1\otimes w)),
\end{align*}
which imply 
\begin{align}\label{XOv}
\varphi(X\otimes w)=X\otimes w',\ \varphi(Y\otimes w)=Y\otimes w',\ \eta=\eta_1.
\end{align}
Claim \ref{iso4} is proved.

Now we define the linear map $$f: R\rightarrow S$$ such that $\varphi(1\otimes w)=1\otimes f(w)$ for any $w\in R$.
It is clear that $f$ is an injection since $\varphi$ is a $\GG$-module isomorphism.  
By Eqs. ~\eqref{1Ov} and \eqref{XOv} we deduce
\begin{align}\label{varphi}
\varphi(X\otimes w)=X\otimes f(w),\ \ 
\varphi(Y\otimes w)=Y\otimes f(w).
\end{align}
For any $m\in\Z, w\in R$,  using Eq. \eqref{varphi} we obtain that
\begin{align*}
\varphi\big(L_m(1\otimes w)\big)=&\varphi\big(\lambda^m(Y-mX+m\eta)\otimes w\big)+\varphi\big(1\otimes L_m(w)\big)\\
=&\lambda^mY\otimes f(w)-m\lambda^m(X-\eta)\otimes f(w)+1\otimes f\big(L_m(w)\big),\\
L_m\big(1\otimes f(w)\big)=&\lambda^m(Y-mX+m\eta)\otimes f(w)+1\otimes L_m\big(f(w)\big)\\
=&\lambda^mY\otimes f(w)-m\lambda^m(X-\eta)\otimes f(w)+1\otimes L_m\big(f(w)\big),\\
\varphi\big(H_m(1\otimes w)\big)=&\varphi\big(\lambda^mX\otimes w+1\otimes H_m(w)\big)
=\lambda^mX\otimes f(w)+1\otimes f\big(H_m(w)\big),\\
H_m\big(1\otimes f(w)\big)=&\lambda^mX\otimes f(w)+1\otimes H_m\big(f(w)\big),\\
\varphi\big(I_m(1\otimes w)\big)=&\varphi\big(\lambda^m\sigma\otimes w+1\otimes I_m(w)\big)
=\lambda^m\sigma\otimes f(w)+1\otimes f\big(I_m(w)\big),\\
I_m\big(1\otimes f(w)\big)=&\lambda^m\sigma\otimes f(w)+1\otimes I_m\big(f(w)\big),\\
\varphi\big(J_m(1\otimes w)\big)=&\varphi\big(1\otimes J_m(w)\big)
=1\otimes f\big(J_m(w)\big),\\
J_m\big(1\otimes f(w)\big)=&1\otimes J_m\big(f(w)\big),
\end{align*}
which imply
\begin{align}\label{LHIJm}
f\big(L_m(w)\big)=L_m\big(f(w)\big), f\big(H_m(w)\big)=H_m\big(f(w)\big), f\big(I_m(w)\big)=I_m\big(f(w)\big), f\big(J_m(w)\big)=J_m\big(f(w)\big).
\end{align}
Moreover, it is easy to see that
\begin{align}\label{zz}
f({\bf c_i}(w))={\bf c_i}(f(w)),\ \ \forall 1\leq i\leq 3, w\in R.
\end{align}
 By Eqs.~\eqref{LHIJm} and \eqref{zz}, we see that $f: R\rightarrow S$ is a $\GG$-module homomorphism.
Note that $f$ is injective and $S$ is an irreducible $\GG$-module. Therefore we deduce that $f:R\rightarrow S$ is a $\GG$-module isomorphism. So $R\cong S$.
This completes the proof of $(1)$.

$(2)$ is similar to $(1)$.	

$(3)$ Assume that $$\phi:\Omega(\lambda,\eta,\sigma,0)\otimes R\rightarrow\Omega(\lambda_1,\eta_1,0,\sigma_1)\otimes S$$ is a $\GG$-module isomorphism. Let $w'\in S$ with $w'\ne 0$. Then there exist $p,q\in\N, v_{ij}\in R$ such that
$$\phi(\sum_{j=0}^q\sum_{i=0}^pX^iY^j\otimes v_{ij})=1\otimes w'.$$
Denote $N=\max\{N_{v_{ij}}, N_{w'}~|~0\leq i\leq p, 0\leq j\leq q\}$. Then for any $m\geq N$, we get
$$0=\phi(J_m(\sum_{j=0}^q\sum_{i=0}^pX^iY^j\otimes v_{ij}))=J_m(\phi(\sum_{j=0}^q\sum_{i=0}^pX^iY^j\otimes v_{ij}))
=J_m(1\otimes w')=\lambda_1^m\sigma_1\otimes w',$$
which is impossible. So $\Omega(\lambda,\eta,\sigma,0)\otimes R$ and $\Omega(\lambda_1,\eta_1,0,\sigma_1)\otimes S$ 
are not isomorphic.
\end{proof}

From Theorem \ref{iso}, we have the following corollary which agrees with \cite[Theorem 4.17]{CGZ}.
\begin{corollary}
 Let $\lambda,\lambda_1,\sigma,\sigma_1\in\C^*,\eta,\eta_1\in\C$.  Then the following conclusions hold.
 \begin{enumerate}[$(1)$]
 \item $\Omega(\lambda,\eta,\sigma,0)\cong\Omega(\lambda_1,\eta_1,\sigma_1,0)$
  if and only if $\lambda=\lambda_1,\eta=\eta_1,\sigma=\sigma_1$.
 \item $\Omega(\lambda,\eta,0,\sigma)\cong\Omega(\lambda_1,\eta_1,0,\sigma_1)$
  if and only if $\lambda=\lambda_1,\eta=\eta_1,\sigma=\sigma_1$.
  \item $\Omega(\lambda,\eta,\sigma,0)$ and $\Omega(\lambda_1,\eta_1,0,\sigma_1)$ are not isomorphic.
\end{enumerate}
\end{corollary}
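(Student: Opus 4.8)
The plan is to deduce this corollary directly from Theorem \ref{iso} by specializing the restricted modules $R$ and $S$. First I would observe that the one-dimensional trivial $\GG$-module $\C$, on which every $L_m, H_m, I_m, J_m$ and every $\mathbf{c_i}$ acts as zero, is an irreducible restricted $\GG$-module: it is irreducible since it is one-dimensional and nonzero, and it is restricted since $\GG^i\cdot\C=0$ for all $i$, hence a fortiori for all sufficiently large $i$. Thus $\C$ is a legitimate choice for the modules $R$ and $S$ appearing in Theorem \ref{iso}.

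Next I would verify the natural $\GG$-module isomorphism $\Omega(\lambda,\eta,\sigma,0)\otimes\C\cong\Omega(\lambda,\eta,\sigma,0)$ given by $v\otimes 1\mapsto v$. Under the diagonal action on the tensor product, any $x\in\GG$ sends $v\otimes 1$ to $xv\otimes 1+v\otimes(x\cdot 1)=xv\otimes 1$, because $x\cdot 1=0$ in the trivial module; hence the map intertwines the two $\GG$-actions. The same identification holds for $\Omega(\lambda,\eta,0,\sigma)\otimes\C\cong\Omega(\lambda,\eta,0,\sigma)$.

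With these two observations in hand, the three parts of the corollary become the corresponding parts of Theorem \ref{iso} in the special case $R=S=\C$. For instance, Theorem \ref{iso}(1) yields $\Omega(\lambda,\eta,\sigma,0)\otimes\C\cong\Omega(\lambda_1,\eta_1,\sigma_1,0)\otimes\C$ if and only if $\lambda=\lambda_1$, $\eta=\eta_1$, $\sigma=\sigma_1$ and $\C\cong\C$; since $\C\cong\C$ always holds and both tensor products are naturally isomorphic to the respective $\Omega$-modules, this is exactly corollary (1). Parts (2) and (3) follow identically from Theorem \ref{iso}(2) and Theorem \ref{iso}(3).

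Since the corollary is essentially an immediate specialization of the theorem, I do not expect any genuine obstacle. The only points that require care are confirming that the trivial module really qualifies as an irreducible restricted module, so that Theorem \ref{iso} is applicable, and checking that tensoring with it is naturally the identity up to isomorphism; both are routine.
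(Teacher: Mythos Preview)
Your proposal is correct and is exactly the intended specialization: the paper states the corollary as an immediate consequence of Theorem \ref{iso} without further detail, and your choice $R=S=\C$ together with the identification $\Omega\otimes\C\cong\Omega$ is the natural way to make this explicit.
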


\section{Examples}

In  this section, we construct some concrete examples of irreducible $\GG$-modules.

\begin{example}{\rm (Irreducible $\GG$-modules from irreducible Virasoro modules.)}
	\newline
	Let $V$ be an irreducible restricted module over $\VV$.
It is easy to see that $V$ is an irreducible restricted module over $\GG$ by defining $\HH V=\widetilde{IJ} V={\bf c_2}V=0$, where $\HH=\mathrm{span}\{H_m,{\bf c_3} ~|~m\in\Z\}$ is the Heisenberg algebra.
	Denote it by $V^\GG$. Then $\Omega(\lambda,\eta,\sigma,0)\otimes V^\GG$ and $\Omega(\lambda,\eta,0,\sigma)\otimes V^\GG$ are irreducible $\GG$-modules by Theorem \ref{tensor product modules}, where $\lambda, \sigma\in\C^*, \eta\in\C$.
	Moreover, there are many examples of irreducible restricted $\VV$-modules in \cite{MZ1}. 
	So we could get many irreducible restricted $\GG$-modules by this way.
\end{example}

\begin{example}{\rm (Irreducible $\GG$-modules from irreducible Heisenberg-Virasoro modules.)}
	\newline
	Let $W$ be an irreducible restricted module over $\LL$.
	Then $W$ is an irreducible restricted module over $\GG$ by defining $\widetilde{IJ} V=0$.
	Denote it by $W^\GG$. Then $\Omega(\lambda,\eta,\sigma,0)\otimes W^\GG$ and $\Omega(\lambda,\eta,0,\sigma)\otimes W^\GG$ are irreducible $\GG$-modules by Theorem \ref{tensor product modules}, where $\lambda, \sigma\in\C^*, \eta\in\C$.
    Furthermore, there are many examples of irreducible restricted $\LL$-modules in \cite{CG, TYZ}. 
	Thus we could get irreducible tensor product modules over $\GG$.
\end{example}

\begin{example}
Let $m\in\Z_+, n\in\N$ and $m,n$ have the same parity. Then we can construct irreducible Whittaker $\GG$-modules $W_{\psi_{m,n}}$ by Theorem \ref{main-1}, which are all restricted modules. 
Then by Theorem \ref{tensor product modules} we could obtain irreducible tensor product modules $\Omega(\lambda,\eta,\sigma,0)\otimes W_{\psi_{m,n}}$ and $\Omega(\lambda,\eta,0,\sigma)\otimes W_{\psi_{m,n}}$ over $\GG$, where $\lambda, \sigma\in\C^*, \eta\in\C$.
Moreover, the authors construct some irreducible restricted $\GG$-modules in \cite{CY}. So we can get  irreducible tensor product modules over $\GG$.
\end{example}

In the remaining parts we construct irreducible $\GG$-modules from Block modules.
\begin{example}
Set $$X_0={\rm span}\{L_0, H_0, I_0, J_0, {\bf c_1, c_2, c_3}\}, \  X_0'={\rm span}\{H_0, I_0\}.$$
It is clear that $X_0$ and $X_0'$ are subalgebras of $\GG$ and the center of $X_0$ is $\C L_0+\C {\bf c_1}+\C {\bf c_2}+\C {\bf c_3}$.
Recall that all irreducible modules over $X_0'$ are obtained by Block in \cite{B}. 
Let $V$ be an infinite-dimensional irreducible module over $X_0'$. Then the action of $I_0$ on $V$ is bijective.
Defining 
$$\GG^+v=0,\ J_0v=\lambda I_0^{-1}v,\ L_0v=\mu v,\ {\bf c_j}v=c_j v,\ \forall 1\leq j\leq 3, v\in V,$$
where $\lambda\in\C^*, \mu, c_i\in\C$. Then $V$ is an irreducible $\GG_0$-module satisfying the conditions of Lemma \ref{CY-Lemma} (with $k=d=0$). So the induced module $\Ind_{\GG_0}^\GG(V)$ is an irreducible $\GG$-module. 
Moreover, we could get $\Omega(\lambda,\eta,0,\sigma)\otimes\Ind_{\GG_0}^\GG(V)$ and $\Omega(\lambda,\eta,\sigma,0)\otimes\Ind_{\GG_0}^\GG(V)$ are irreducible $\GG$-modules by Theorem \ref{tensor product modules}, where $\lambda, \sigma\in\C^*, \eta\in\C$.
\end{example}

\begin{example}For any $d\in 2\N+1$, set 
	 $$Y_d=\sum_{i\in\Z_+}(\C L_i+\C H_i+\C I_{i-d}+\C J_{i-d}).$$
	 It is clear that $Y_d$ is an ideal of $\GG_d$. Thus we have the quotient algebra $X_d=\GG_d/Y_d$.
We denote $$X_d={\rm span}\{L_0, H_0, I_{-d}, J_{-d}, {\bf c_1, c_2, c_3}\},\ X_d'={\rm span}\{H_0, I_{-d}, J_{-d}, {\bf c_1, c_2, c_3}\},  \ X_d''={\rm span}\{H_0, I_{-d}\}$$
by abusing notations.
	Let $V$ be an infinite-dimensional irreducible module over $X_d''$. Then the action of $I_{-d}$ on $V$ is bijective.
	Defining 
	$$J_{-d}v=\lambda I_{-d}^{-1}v,\ \ {\bf c_j}v=c_j v,\ \forall 1\leq j\leq 3, v\in V,$$
	where $\lambda\in\C^*, c_i\in\C$. Then $V$ is an irreducible $X_d'$-module.
	Denote $W=\Ind_{X_d'}^{X_d}(V)$, which is the induced module over $X_d$.
	\begin{lemma}
		$W$ is an irreducible $X_d$-module.
	\end{lemma}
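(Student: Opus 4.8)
The plan is to exploit the fact that $X_d'$ is an ideal of $X_d$ whose quotient is one-dimensional, so that $W$ becomes a free module of rank $\dim V$ over the polynomial algebra $\C[L_0]$. First I would record the brackets inside $X_d$: since the coefficients $\frac{m^3-m}{12},\,m^2,\,m$ all vanish at $m=0$, every central term drops out, leaving $[L_0,H_0]=0$, $[L_0,I_{-d}]=-dI_{-d}$, $[L_0,J_{-d}]=-dJ_{-d}$, $[H_0,I_{-d}]=I_{-d}$, $[H_0,J_{-d}]=-J_{-d}$ and $[I_{-d},J_{-d}]=0$. In particular $X_d'=\mathrm{span}\{H_0,I_{-d},J_{-d},\mathbf{c_1,c_2,c_3}\}$ is an ideal of $X_d$ with $X_d/X_d'=\C\bar L_0$, so by the PBW theorem $W=\UU(X_d)\otimes_{\UU(X_d')}V$ has basis $\{L_0^i\otimes v\}$. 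I would identify $W$ with the space of $V$-valued polynomials in the variable $t:=L_0$.

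Next I would compute the action in this model. From $[L_0,I_{-d}]=-dI_{-d}$ one gets $I_{-d}L_0^i=(L_0+d)^iI_{-d}$, and similarly for $J_{-d}$, while $H_0$ commutes with $L_0$. Hence $L_0$ acts as multiplication by $t$; $H_0$ acts coefficientwise by $H_0^V$; $I_{-d}$ acts as $w(t)\mapsto I_{-d}^V\,w(t+d)$; and $J_{-d}$ acts as $w(t)\mapsto \lambda (I_{-d}^V)^{-1}w(t+d)$, where $I_{-d}^V,J_{-d}^V$ denote the given bijective actions on $V$.

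The crux of the argument is to produce an operator lowering the $t$-degree. Because $H_0$ commutes with $L_0$ and $I_{-d},J_{-d}$ merely rescale under $\mathrm{ad}\,L_0$, no single generator decreases the degree; the key observation is that the composite $\tau:=\lambda^{-1}J_{-d}I_{-d}\in\UU(X_d)$ acts as the \emph{pure shift} $w(t)\mapsto w(t+2d)$, the factor $\lambda$ and the operator $I_{-d}^V$ cancelling. Therefore $\tau-1$ is a difference operator that lowers the $t$-degree by exactly one whenever it is positive (here $2d\neq 0$ since $d\geq 1$), and applying it $n$ times sends a degree-$n$ element $\sum_{i=0}^n t^i\otimes v_i$ with $v_n\neq 0$ to the nonzero constant $(2d)^n n!\,(1\otimes v_n)$. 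Thus any nonzero submodule $W'$ of $W$ must contain an element $1\otimes v$ with $v\neq 0$. I expect this identification of $\tau$ and the ensuing degree-lowering mechanism to be the main obstacle; once it is in hand, the rest is routine.

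Finally I would set $U=\{v\in V : 1\otimes v\in W'\}$, which is nonzero by the previous step. The action formulas show $H_0^V v,\ I_{-d}^V v,\ J_{-d}^V v\in U$ and that $U$ is stable under each $\mathbf{c_j}$, so $U$ is a nonzero $X_d'$-submodule of $V$; irreducibility of $V$ forces $U=V$, i.e. $1\otimes V\subseteq W'$. Applying $L_0$ (multiplication by $t$) repeatedly then gives $L_0^i\otimes v\in W'$ for all $i\in\N$ and all $v\in V$, and since these vectors span $W$ we conclude $W'=W$. Hence $W$ is irreducible over $X_d$.
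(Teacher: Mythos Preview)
Your proof is correct and follows essentially the same route as the paper. The paper's degree-lowering operator is $I_{-d}J_{-d}-\lambda$, which (since $[I_{-d},J_{-d}]=0$) is exactly $\lambda(\tau-1)$ in your notation; both of you then reduce to a nonzero element of $1\otimes V$ and invoke the irreducibility of $V$. Your write-up is somewhat more explicit about the shift interpretation and about why $1\otimes V\subseteq W'$ once a single $1\otimes v$ is captured, whereas the paper compresses this last step into ``it is easy to see that $\langle w\rangle=W$''.
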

	\begin{proof}
	For any nonzero $w\in W$, we could write $w$ in the form $$\sum_{r=0}^qL_0^rv_r$$	by the PBW Theorem, where $q\in\N, v_r\in V$ and $v_q\ne 0$. 
	Let $\<w\>$ denote the submodule of $W$ generated by $w$.
	If $q=0$, then it is easy to see that $\<w\>=W$.
	If $q>0$, we compute 
	\begin{align*}
	(I_{-d}J_{-d}-\lambda)(w)&=\sum_{r=0}^q(I_{-d}J_{-d}-\lambda)L_0^rv_r \\
	&=\sum_{r=0}^q[I_{-d}J_{-d}, L_0^r]v_r=2dq\lambda L_0^{q-1}v_q+{\rm \ lower-terms\ of \ } L_0.
	\end{align*}
Thus we also can deduce that $\<w\>=W$. So $W$ is an irreducible $X_d$-module.
	\end{proof}

By defining $Y_d W=0$, we could get that $W$ is an irreducible $\GG_d$-module  
	satisfying the conditions of Lemma \ref{CY-Lemma} (with $k=-d$). Therefore the induced module $\Ind_{\GG_d}^\GG(W)$ is an irreducible $\GG$-module. 
	Furthermore, we could obtain that $\Omega(\lambda,\eta,0,\sigma)\otimes\Ind_{\GG_d}^\GG(W)$ and $\Omega(\lambda,\eta,\sigma,0)\otimes\Ind_{\GG_d}^\GG(W)$ are irreducible $\GG$-modules by Theorem \ref{tensor product modules}, where $\lambda, \sigma\in\C^*, \eta\in\C$.
\end{example}

\noindent {\bf Acknowledgements.} This work is supported by NSFC
(11931009, 12401032), China National Postdoctoral
Program for Innovative Talents (BX20220158), China
Postdoctoral Science Foundation (2022M711708), Fundamental
Research Funds for the Central Universities and Nankai Zhide
Foundation.

\noindent {\bf Declaration of interests. } The author has no conflicts of interest to disclose.

\noindent {\bf Data availability. } Data sharing is not applicable
as no new data were created or analyzed.


\begin{thebibliography}{99}
\bibitem{ALZ} D. Adamovi{\'c}, R. L{\"u} and K. Zhao, Whittaker modules for the affine Lie algebra $A_1^{(1)}$, Adv. Math. 289 (2016), 438-479.

\bibitem{A}  N. Aizawa, Some properties of planar Galilean conformal algebras, Lie Theory Appl. Phys. 36 (2013), 301-309.

\bibitem{ADKP} E. Arbarello, C. De Concini, V. G. Kac and C. Procesi, Moduli spaces
of curves and representation theory, Commun. Math. Phys. 117 (1988), 1-36.

\bibitem{AP} D. Arnal and G. Pinczon, On algebraically irreducible representations of the Lie algebra $\sl(2),$ J. Math. Phys. 15 (1974), 350-359.



\bibitem{BG} A. Bagchi and R. Gopakumar, Galilean conformal algebras and AdS/CFT, J. High Energy Phys. 07 (2009), 037.


\bibitem{BGMM} A. Bagchi, R. Gopakumar, I. Mandal and A. Miwa, GCA in 2d, J. High Energy Phys. 08 (2010), 004.



\bibitem{BBFK} V. Bekkert, G. Benkart, V. Futorny and I. Kashuba, New irreducible modules for Heisenberg and affine Lie algebras, J. Algebra 373 (2013), 284-298.

\bibitem{BO} G. Benkart and M. Ondrus, Whittaker modules for generalized Weyl algebras, Represent. Theory 13 (2009), 141-164.


\bibitem{BF} Y. Billig and V. Futorny, Classification of irreducible representations of Lie algebra of vector fields on a torus, J. Reine Angew. Math. 720 (2016), 199-216.

\bibitem{B} R. Block, The irreducible representations of the Lie algebra $\sl(2)$ and of the Weyl algebra,
Adv. Math. 39 (1981), 69-110.

\bibitem{CLW} Y. Cai, R. L{\" u} and Y. Wang, Classification of simple Harish-Chandra modules for map (super)algebras related to the Virasoro algebra, J. Algebra 570 (2021), 397-415.

\bibitem{CTZ} Y. Cai, H. Tan and K. Zhao, New representations of affine Kac-Moody algebras, J. Algebra 547 (2020), 95-115.

\bibitem{Car} R. Carter, Lie algebras of finite and affine type, vol. 96 of Cambridge Studies in Advanced Mathematics, Cambridge University Press, 2005.

\bibitem{CP} V. Chari and A. Pressley, A new family of irreducible, integrable modules for affine Lie algebras, Math. Ann. 277 (1987), 543-562.

\bibitem{CG}H. Chen, X. Guo, New simple modules for the Heisenberg-Virasoro algebra, J. Algebra 390 (2013), 77--86.


\bibitem{ChenGZ} H. Chen, X. Guo and K. Zhao, Tensor product weight modules over the Virasoro algebra, J. London. Math. Soc. 88 (2013), 829-844.

\bibitem{CY} Q. Chen and Y. Yao, Simple restricted modules for the universal central extension of the planar Galilean conformal algebra, J. Algebra 634 (2023), 698-721.

\bibitem{CYY} Q. Chen, Y. Yao and H. Yang,  Whittaker modules for the planar Galilean conformal algebra and its central extension, Comm. Algebra 50 (2022), 5042-5059.

\bibitem{CGZ} J. Cheng, D. Gao and Z. Zeng, Modules over the planar Galilean conformal algebra arising from free modules of rank one, Pacific J. Math. 326 (2023), 227-249.

\bibitem{Chr} K. Christodoupoulou, Whittaker modules for Heisenberg algebras and imaginary Whittaker modules for affine Lie algebras, J. Algebra 320 (2008), 2871-2890.


\bibitem{GG} D. Gao and Y. Gao, Representations of the planar Galilean conformal algebra, Commun. Math. Phys. 391 (2022), 199-221.

\bibitem{GMZ} D. Gao, Y. Ma and K. Zhao, Non-weight modules over the mirror Heisenberg-Virasoro algebra, Sci. China Math. 65 (2022), 2243-2251.

\bibitem{GaoZ} D. Gao and K. Zhao, Tensor product weight modules for the mirror Heisenberg-Virasoro algebra, J. Pure Appl. Algebra 226 (2022), 106929.

\bibitem{GLP} S. Gao, D. Liu and Y. Pei, Structure of the planar Galilean conformal algebra, Rep. Math. Phys. 78 (2016), 107-122.

\bibitem{GZ} X. Guo and K. Zhao, Irreducible representations of non-twisted affine Kac-Moody algebras, arXiv:1305.4059.



\bibitem{Kac} V. Kac, Infinite dimensional Lie algebras, 3rd edn, Cambridge University Press, Cambridge, 1990.



\bibitem{LG} G. Liu and X. Guo, Harish-Chandra modules over generalized Heisenberg-Virasoro algebras,
Israel J. Math. 204 (2014), 447-468.

\bibitem{LZ} R. Lu and K. Zhao, Classification of irreducible weight modules over higher rank Virasoro algebras, Adv. Math. 206 (2006), 630-656. 

\bibitem{LZ1}  R. L{\"u} and K. Zhao, Classification of irreducible weight modules over the twisted Heisenberg-Virasoro algebra, Commun. Contemp. Math. 12 (2010), 183-205.


\bibitem{LZ2}  R. L{\"u} and K. Zhao, Generalized oscillator representations of the twisted Heisenberg-Virasoro algebra, Algebr. Represent. Theory. 23 (2020), 1417-1442.

\bibitem{Ma} J. Maldacena, The large N limit of superconformal field theories and supergravity, Adv. Theor. Math. Phys. 2 (1998), 231-252.
[Int. J. Theor. Phys. 38 (1999), 1113-1133].

\bibitem{MT} D. Martelli and Y. Tachikawa, Comments on Galilean conformal field theories and their geometric realization, J. High Energy Phys. 05 (2010), 091.

\bibitem{M} O. Mathieu, Classification of Harish-Chandra modules over the Virasoro Lie algebra, Invent. Math. 107 (1992), 225-234.


\bibitem {MZ} V. Mazorchuk and K. Zhao, Classification of simple weight Virasoro modules with a finite dimensional weight space, J. Algebra 307 (2007), 209-214.

\bibitem{MZ1} V. Mazorchuk and K. Zhao, Simple Virasoro modules which are locally finite over a positive part, Sel. Math. New Ser. 20 (2014), 839-854.

\bibitem{Mc} E. McDowell, On modules induced from Whittaker modules, J. Algebra 96 (1985), 161-177.


\bibitem{N} J. Nilsson, Simple $\mathrm{sl_{n+1}}$-module structures on $\UU(\mathfrak{h})$, J. Algebra 424 (2015), 294-329.


\bibitem{K} B. Kostant, On Whittaker vectors and representation theory, Invent. Math. 48 (1978), 101-184.

\bibitem{S} Y. Su, A classification of indecomposable $\sl_2(\C)$-modules and a conjecture of Kac on irreducible modules over the Virasoro algebra, J. Algebra 161 (1993), 33-46.

\bibitem{TYZ} H. Tan, Y. Yao and K. Zhao, Simple restricted modules over the Heisenberg-Virasoro algebra as VOA modules, arXiv:2110.05714.

\bibitem{TZ} H. Tan and K. Zhao, $W^+_n$ and $W_n$-module structures on $\UU(\mathfrak{h_n})$, J. Algebra 424 (2015), 357-375.


\bibitem{Vir} M. A. Virasoro, Subsidiary conditions and ghosts in dual resonance models, Phys. Rev. D (1970), 2933-2936.




\bibitem{ZD} W. Zhang and C. Dong, $W$-algebra $W(2,2)$ and the vertex operator algebra $L(\frac{1}{2},0)\otimes L(\frac{1}{2},0)$, Commun. Math. Phys. 285 (2009), 991-1004.









\end{thebibliography}
\end{document}